%
%
%
%
\documentclass{article}
\usepackage[utf8]{inputenc}
\usepackage{dsfont}
\usepackage{graphicx}
\usepackage{amsmath}
\usepackage{cancel}
\usepackage{ stmaryrd }
\usepackage{geometry}
\usepackage{amsfonts}
\usepackage{amsthm}
\usepackage{hyperref}
\geometry{
	a4paper,
	left=20mm,
	right=20mm,
	top=20mm,
}

\usepackage[french,english]{babel}
\usepackage{amsfonts,amsmath,latexsym,amsthm,amssymb,euscript,eufrak,graphicx,units,mathrsfs,setspace,stmaryrd,dsfont}

\usepackage{systeme}
\usepackage{float}
\newfloat{figure}{H}{lof}
\floatname{figure}{\figurename}

\newtheorem{prpstn}{Proposition}[section]

\newtheorem{thrm}[prpstn]{Theorem}
\newtheorem{lmm}[prpstn]{Lemma}

\newtheorem{rmrk}[prpstn]{Remark}

\newtheorem{dfntn}[prpstn]{Definition}

\newcommand{\etal}{\textit{et al.}}
\newcommand{\xdif}{\mathrm{d}}
%



 
\author{
	 Lorick Huang \footnote{INSA de Toulouse, IMT UMR CNRS 5219, Universit\'e de Toulouse, 135 avenue de Rangueil 31077 Toulouse Cedex 4 France. \; Email: \texttt{lorick.huang@insa-toulouse.fr.}} \and Mahmoud Khabou\footnote{INSA de Toulouse, IMT UMR CNRS 5219, Universit\'e de Toulouse, 135 avenue de Rangueil 31077 Toulouse Cedex 4 France. \; Email: \texttt{mahmoud.khabou@insa-toulouse.fr}} 
}

\title{Convergence of the Discrete-Time Compound Hawkes Process with Exponential or Erlang Kernel}
\begin{document}
\maketitle

\allowdisplaybreaks
%
%
\begin{abstract} 
	\noindent
	Due to its clustering and self-exciting properties, the Hawkes process has been used extensively in numerous fields ranging from sismology to finance. Since data is often aquired on regular time intervals, we propose a piece-wise constant model based on a Discrete-Time Hawkes Process \textit{(DTHP)}. We prove that this discrete-time model converges to the usual continuous-time Hawkes process as the time-step tends to zero. 
\selectlanguage{french} 
\begin{center} 
	\textbf{Résumé} 
\end{center} 
Les propri\'et\'es d’auto-excitation des processus de Hawkes permettent une alternative de mod\'elisation efficace au processus de Poisson \`a intensit\'e d\'eterministe dans plusieurs domaines d'application comme la finance ou la sismologie. Dans certaines applications, l'acc\`es aux donn\'ees se fait \`a des dates d\'eterministes et non de fa\c con continue dans le temps. Ainsi, seulement une approximation \`a temps discret du processus de Hawkes sur une grille d\'eterministe est observable. Dans cet article nous \'etudions la convergence de cette approximation \`a temps discret lorsque le pas de la subdivision tend vers z\'ero.
\end{abstract}
\noindent
\textbf {Subject Class}: 60J05, 60J25, 60G55.\\
\textbf{Keywords}: Hawkes Process, Discrete-Time, Markov Process.
%

\section{Introduction and Main Result}
The linear Hawkes process was first introduced in 1971 by Hawkes \cite{MR0358977} as a point process whose intensity exhibits an interesting self-excitation property. Even though Hawkes process has initially contributed to seismology by describing the aftershocks in case of an earthquake, its self-exciting and clustering properties made it a popular model in  financial and actuarial applications.

For instance Errais \etal{} used it to model the cumulative loss due to default in a portfolio of firms \cite{MR2719785}, while Bacry \etal{} used it for measuring the endogeneity of stock markets \cite{bacry2015hawkes}.\\

In the continuous time setting, the Hawkes process is defined as follows.
Consider a probability space $(\Omega, \mathcal F, \mathbb P)$ with a filtration $(\mathcal F_t)_{t\in [0,+\infty)}$ and a sequence of increasing stopping times $0<\theta_1 < \theta_2 <\cdots$.\\
A point process is defined as the counting measure 
$$H_t=H([0,t]):=\sum_{i=1}^{+\infty}\mathds 1_{\theta_i \leq t}.$$
We assume that an event at time $\theta_n$ corresponds to a  financial loss $\zeta_n$. The total loss a time $t$ is the compound process
$$L_t:=\sum_{i=1}^{+\infty} \zeta_i \mathds 1_{\theta_i \leq t} = \sum_{i=1}^{H_t} \zeta_i ,$$
where $\zeta_n$ are independent identically distributed \textit{(i.i.d)} non-negative random variables with an integrable distribution $\nu$ and independent from $(H_t)_{t\in [0,+\infty)}$.\\
\begin{rmrk}
	If $\zeta_n$ are chosen to be deterministic and equal to $1$ then $L_t=H_t$ for all $t \geq 0$. 
\end{rmrk}
The intensity of a point process is a measure of how much it tends to jump at a certain time $t$ and is defined as 
$$\lambda_t=\lim_{\delta t \rightarrow 0}\frac{\mathbb E[H_{t+\delta t -}-H_t|\mathcal F_t]}{\delta t}.$$
In the case of a Hawkes process, the realization of an event causes an increase in the probability of other events. This translates in the intensity as:
\begin{align*}
\lambda_t&=\mu (t) + \int_{[0,t)}\phi(t-s) \xdif L_s,\\
&=\mu (t) + \sum _{\theta_i<t}\phi(t-\theta_i)\zeta_i,
\end{align*}
where $\mu$ is a deterministic non-negative function playing the role of the baseline intensity and $\phi$ is a non-negative decaying kernel. Indeed, more events ($\theta_i$) mean more terms in the sum, thus a higher intensity which in return triggers more events.
Larger losses have a bigger impact on the intensity as well.\\
The condition to avoid instability (i.e. infinite amount of jumps in a finite interval) is  $\|\phi\|_1 \mathbb E [\zeta]<1$. Curious readers can consult \cite{MR3313750} for nearly unstable Hawkes processes (the kernel's norm approaches the limit of instability).\\

In this paper we study the case where the intensity kernel $\phi$ is either an exponential $(\phi(u)=\alpha e^{-\beta u})$ or an Erlang function $(\phi(u)=\alpha u e^{-\beta u})$. The exponential kernel case has been studied extensively in the literature. This is mainly because in this case, the intensity $(\lambda_t)_{t\in [0,+\infty)}$ is a Markov process. For example, Errais \etal{} \cite{MR2719785} derived formulae for the Laplace transform for the Markov Hawkes process. Indeed, if the baseline intensity is chosen to be $\mu(t)=\lambda_{\infty}+(x-\lambda_{\infty}) e^{-\beta{t}}$, with the initial intensity $x\geq 0$ and the parameter $\lambda_{\infty}>0$
, the intensity takes the form:
\begin{equation}
\lambda_t=\lambda_{\infty}+(x-\lambda_{\infty})e^{-\beta{t}}+\int_{[0,t)}\alpha e^{-\beta(t-s)}\xdif L_s, \label {eq:intensity}
\end{equation}
where $\alpha$ and $\beta$ are two positive real numbers such that $\beta> \alpha \mathbb E [\zeta]$.
In this case, the intensity satisfies the following stochastic differential equation (SDE):
\begin{equation*}
(SDE_{exp})
\begin{cases}
\xdif \lambda_t=\beta(\lambda_\infty-\lambda_t)\xdif t +\alpha \xdif L_t,\\
\lambda_0=x.
\end{cases}
\end{equation*}
\begin{rmrk}
	In many cases, the initial intensity $\lambda_0$ is chosen to be equal to the parameter $\lambda_{\infty}$ which yields a constant baseline intensity $\lambda_t=\lambda_{\infty}+\int_{[0,t)}\alpha e^{-\beta (t-s)}\xdif L_s.$
\end{rmrk}
If the kernel is an Erlang function, then the intensity takes the form 
\begin{equation}
\lambda_t=\lambda_{\infty}+(x-\lambda_{\infty})e^{-\beta{t}}+\int_{[0,t)}\alpha (t-s) e^{-\beta(t-s)}\xdif L_s, \label {eq:intensityE}
\end{equation}
It is possible to "Markovize" the intensity by taking an auxiliary process $\xi_t=\int_{[0,t)} \alpha e^{-\beta(t- s)} \xdif L_s$ into account. Thus, in this case as well, the vector $(\lambda_t,\xi_t)_{[0,+\infty)}$ follows the dynamics given by the SDE:
\begin{equation*}
(SDE_{Erl})
\begin{cases}
\xdif \lambda_t=\beta(\lambda_\infty-\lambda_t)\xdif t +\xi_t \xdif t,\\
\xdif \xi_t=-\beta \xi_t \xdif t +\alpha \xdif L_t,\\
\lambda_0=x,\\
\xi_0=0.
\end{cases}
\end{equation*}

So far the simulation of the Hawkes process has been based on Ogata's thinning \cite{1056305}, on an immigration clustering approach like in the work of M{\o}ller \etal{} \cite{MR2156552} or in the particular Markov case on the sampling of jumping times such as the algorithm proposed by Dassios \etal{} \cite{MR3084573}.\\
These approaches simulate exactly the jump times of the process on a time continuum. However, in reality data is often recorded on discrete time intervals, \textit{e.g.} every minute, every hour or every day. \\
This motivates the study of Discrete-Time Hawkes Processes \textit{(DTHP)} first introduced by Seol \cite{MR3321518}, where limit theorems have been established as time goes to infinity.\\
In this paper we study the behaviour as the size of the time step goes to zero instead.\\ The intensity (in the exponential kernel case) or the intensity-auxiliary process vector (in the Erlang kernel case) of this DTHP is considered as piece-wise constant process constructed from a Markov chain on the time grid (\textit{cf.} figure ~\ref{fig:subdivision}).
\begin{rmrk}
	Knowing the intensity is sufficient for the reconstruction of  $(L_t)_{t\in [0,+\infty)}$. This can be seen on figure \ref{fig:dassios} taken from \cite{MR3084573}. This is why we focus on the intensity from now on. The loss process $(L_t)_{t\in [0,+\infty)}$ is obtained by adding an independent copy of $\zeta$ at every jumping time.
	\begin{figure}[h!]
		\centering
		\includegraphics[width=110mm]{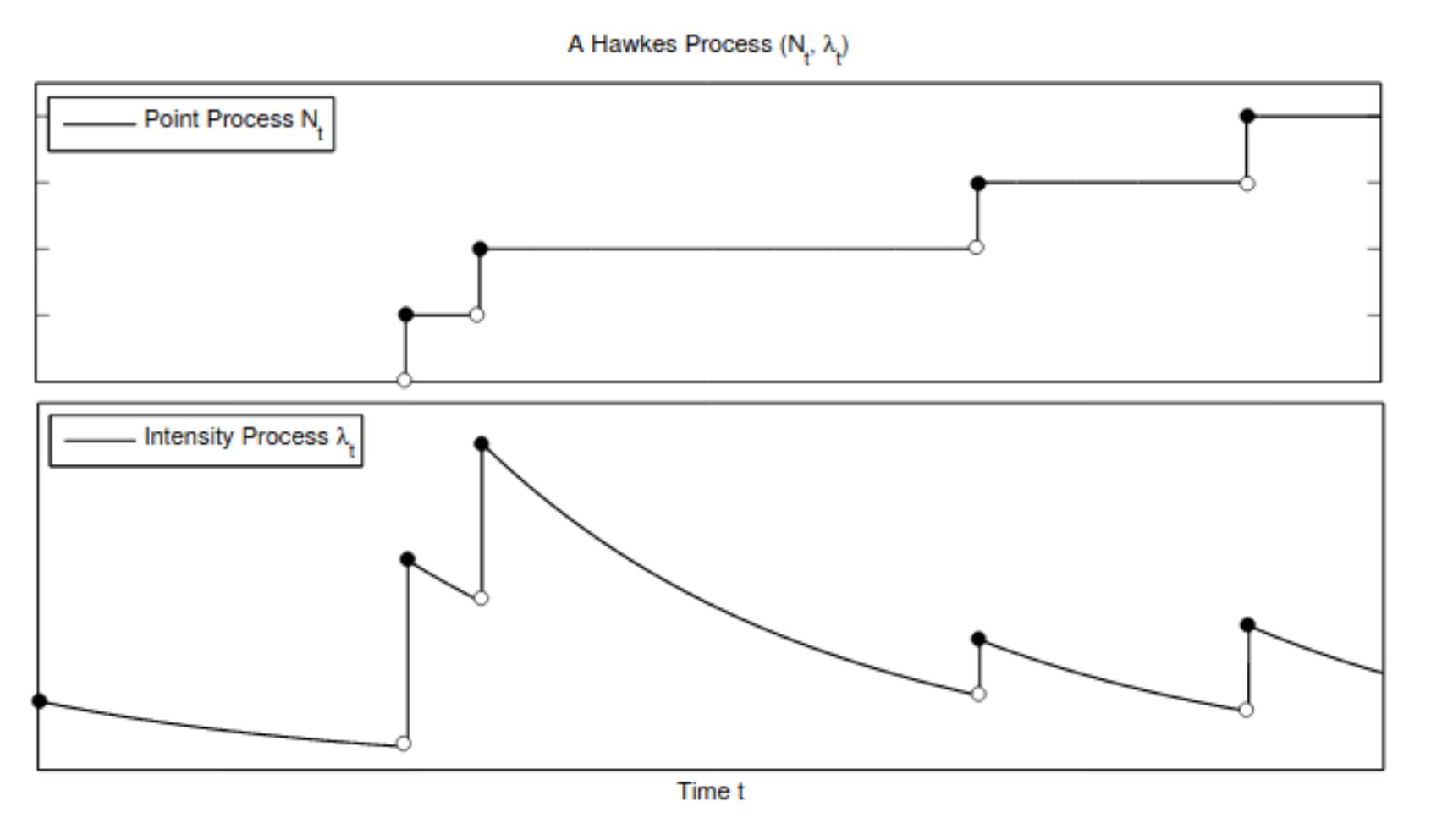}
		\caption{Hawkes process with exponential decaying intensity $(N_t,\lambda_t)$.}
		\label{fig:dassios}
	\end{figure}
\end{rmrk}
The main result is to show that the intensity (\textit{resp.} intensity-auxiliary process vector) converges weakly to the continuous time Hawkes intensity (\textit{resp.} to the intensity-auxiliary process vector) in the Skorokhod topology on $[0,+\infty)$ as the grid becomes finer and finer.\\

Let $\zeta$ be a positive random variable with finite expectation and let $\nu$ be its distribution. Let $\alpha, \beta, \lambda_{\infty}\in \mathbb R ^*_+$ such that $\alpha \mathbb E [\zeta]<\beta$ (exponential kernel) or $\alpha \mathbb E [\zeta]< \beta ^2$ (Erlang kernel) and $x>0$.\\
Let $[0,T], 0<T<+\infty$ be a time interval, $N\in \mathbb N^*$ and $(t_i^N:=\frac{iT}{N})_{i\in \llbracket 0\cdots N\rrbracket}$ be a grid with a step $h_N=\frac{T}{N}$. In some cases we refer to $h_N$ by $h$ to avoid clogging up the notation.
\begin{figure}[h!]
	\centering
	\includegraphics[width=100mm]{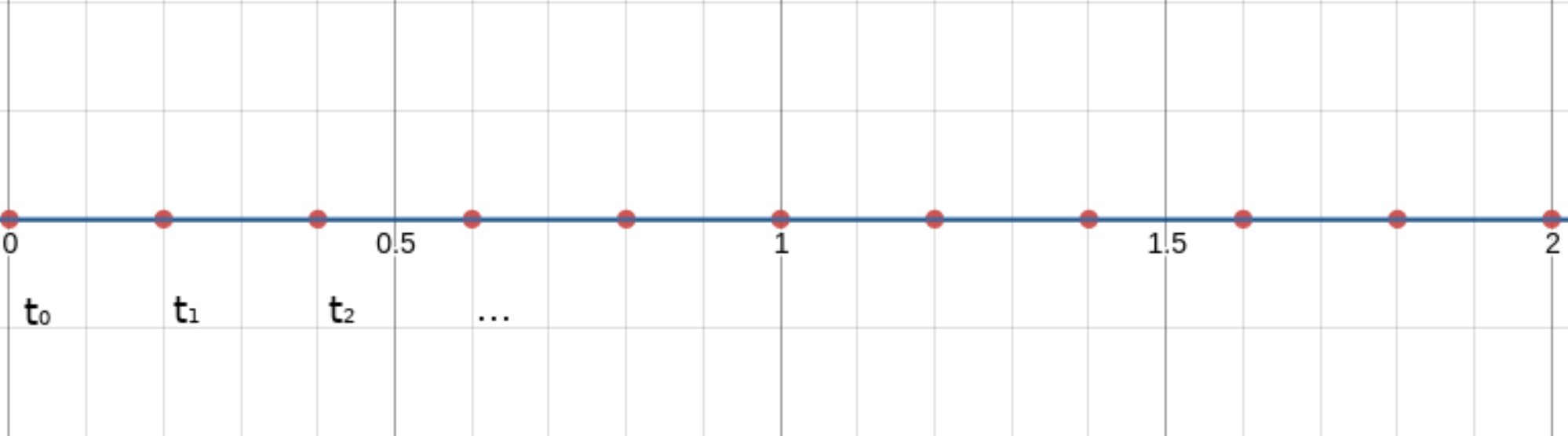}
	\caption{An example of a subdivision with $T=2$ and $N=10$}
	\label{fig:subdivision}
\end{figure}

\begin{dfntn}
	\label{chain}
	Let $(\Omega,\mathcal F,\mathbb P)$ be a probability space.
	Let $N\in \mathbb N^*$ , $T>0$ and a sequence of independent $[0,1]$ uniform random variables $(U^N_k)_{k\in \mathbb N}$ as well as a sequence $(\xi^N_k)_{k\in \mathbb N}$ of iid positive random variables with finite expectation defined on $(\Omega,\mathcal F,\mathbb P)$.
	\begin{enumerate}
		\item \textit{If $\phi$ is an exponential kernel:}
		The Hawkes Markov Chain $(l^N)$ is a Markov chain defined according to the induction rule:
		\begin{equation*}
		(l^N)
		\begin{cases}
		l^N_{k+1}=\lambda_\infty(1-e^{-\beta h})+(l^N_k+\alpha \zeta_{k+1}^N\mathds 1_{U_{k+1}^N<l^N_k\cdot h})e^{-\beta h},\\
		l^N_0=x.
		\end{cases}
		\end{equation*}
		\item \textit {If $\phi$ is an Erlang kernel:} The Hawkes Markov Chain $(l ^N,a ^N)$ is a Markov chain defined according to the induction rule:
		\begin{equation*}
		( l ^N, a ^N)
		\begin{cases}
		l^N_{k+1}=\lambda_\infty(1-e^{-\beta h})+ l^N_k e^{-\beta h}+ a^N_{k+1}h,\\
		a^N_{k+1}= (a^N_{k}+\alpha \zeta_{k+1}^N \mathds{1}_{U^N_{k+1}< l^N_{k}\cdot h})e^{-\beta h},\\
		
		l^N_0=x, \\ a^N_0=0.
		\end{cases}
		\end{equation*}
		
	\end{enumerate}
\end{dfntn}

\begin{dfntn} \label{approx}
	Given $N \in \mathbb N^*$ and $T>0$, the $N$-th DTHP intensity $(\tilde \lambda^N_t)_{t\in [0,+\infty)}$ and the Hawkes auxiliary process $(\tilde \xi_t^N)_{t\in [0,+\infty)}$ (if the kernel is an Erlang function) are defined as the c\`adl\`ag process 
	$$\tilde\lambda^N_t=l^N_{\lfloor \frac{Nt}{T}\rfloor},$$
	$$\tilde\xi^N_t=a^N_{\lfloor \frac{Nt}{T}\rfloor},$$
	where $l^N_{\lfloor \frac{Nt}{T}\rfloor}$ and $a^N_{\lfloor \frac{Nt}{T}\rfloor}$ are defined in \ref{chain}.\\
	This process takes the values of the Markov chain on the grid points. Indeed
	$$\tilde \lambda _{t_i}^N=l^N_{\lfloor \frac{iTN}{TN}\rfloor}=l^N_i \text{ and } \tilde \xi _{t_i}^N=a^N_{\lfloor \frac{iTN}{TN}\rfloor}=a^N_i.$$
\end{dfntn}

The following theorem, which will be proven in the following sections, states the main result:
\begin{thrm}
	\label{MainResult}
	Let $(H_t)_{t\in [0,+\infty)}$ be a Hawkes process, $(L_t)_{t\in [0,+\infty)}$ its loss and $(\lambda_t)_{t\in [0,+\infty)}$ its intensity.\\
	\begin{enumerate}
		\item \textit {If $\phi$ is an exponential kernel:}
		Let $(\tilde \lambda_t^N)_{t\in [0,+\infty)}$ be an $N$-th DTHP intensity (defined in \ref{approx}). Then we have the convergence
		$$(\tilde \lambda_t^N)_{t\in [0,+\infty)}\Longrightarrow _{N\rightarrow +\infty} (\lambda_t)_{t\in [0,+\infty)}$$
		weakly in the Skorokhod space $D_{\mathbb R_+} [0,+\infty)$, the set of all right continuous with left limits (c\`adl\`ag) non-negative functions on ${\mathbb R_+}=[0,+\infty)$.
		
		\item \textit {If $\phi$ is an Erlang kernel:}
		Let $(\tilde \lambda_t^N,\tilde \xi_t^N)_{t\in [0,+\infty)}$ be an  $N$-th DTHP intensity and auxiliary process (defined in \ref{approx}). Then we have the convergence
		$$(\tilde \lambda_t^N,\tilde \xi_t^N)_{t\in [0,+\infty)}\Longrightarrow _{N\rightarrow +\infty} (\lambda_t,\xi_t)_{t\in [0,+\infty)}$$
		weakly in the Skorokhod space $D_{\mathbb R_+^2} [0,+\infty)$.
	\end{enumerate}
\end{thrm}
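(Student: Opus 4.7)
The plan is to establish weak convergence of Markov processes via the classical generator-convergence scheme of Ethier and Kurtz: prove convergence of the discrete infinitesimal generators on a core of the limit generator, combined with a compact containment condition, and then invoke well-posedness of the limiting martingale problem. The continuous-time Hawkes intensity (resp.\ the pair $(\lambda_t,\xi_t)$) is a piecewise-deterministic Markov process whose generator acts on smooth test functions by
\begin{equation*}
\mathcal{L} f(\lambda) = \beta(\lambda_\infty - \lambda) f'(\lambda) + \lambda \int_0^\infty \big[f(\lambda + \alpha z) - f(\lambda)\big]\, \nu(\xdif z)
\end{equation*}
in the exponential case, and by an analogous two-variable integro-differential expression in the Erlang case. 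Denoting by $T_N$ the one-step transition operator of the chain $(l^N_k)$ (resp.\ $(l^N_k,a^N_k)$), I would set $\mathcal{A}_N f := (T_N f - f)/h_N$ and show that $\mathcal{A}_N f \to \mathcal{L} f$ uniformly on compact subsets of the state space for $f$ in a suitable core.

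First, conditioning on $l^N_k = \lambda$ (and noting $\lambda h_N \le 1$ on the relevant range for $N$ large), one gets
\begin{align*}
T_N f(\lambda) &= (1 - \lambda h)\, f\big(\lambda_\infty(1-e^{-\beta h}) + \lambda e^{-\beta h}\big) \\
&\quad + \lambda h \int f\big(\lambda_\infty(1-e^{-\beta h}) + (\lambda + \alpha z)\, e^{-\beta h}\big)\,\nu(\xdif z).
\end{align*}
Combining the expansion $e^{-\beta h} = 1 - \beta h + O(h^2)$ with a second-order Taylor expansion of $f \in C^2_c(\mathbb{R}_+)$ yields $\mathcal{A}_N f(\lambda) = \mathcal{L} f(\lambda) + O(h_N)$, the remainder being controlled by the integrability of $\nu$ and the boundedness of $f''$. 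The Erlang case proceeds via the same Taylor scheme applied to the two-dimensional chain; the only subtlety is the extra term $\alpha\, \zeta^N_{k+1}\, h\, e^{-\beta h}$ appearing in the update of $l^N_{k+1}$, whose contribution to $\mathcal{A}_N f$ is $O(h)$ and therefore vanishes in the limit, consistently with the fact that only $\xi_t$ jumps in the continuous model.

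For compact containment, I would derive moment bounds directly from the recursion. Taking expectations yields $\mathbb{E}[l^N_{k+1}] \le e^{-\beta h}\big(1 + \alpha h\, \mathbb{E}[\zeta]\big)\,\mathbb{E}[l^N_k] + \lambda_\infty(1 - e^{-\beta h})$, whose multiplicative coefficient is strictly less than $1$ for $N$ large under the stability hypothesis $\alpha \mathbb{E}[\zeta] < \beta$. Iterating gives $\sup_{N,\, k \le NT/h_N} \mathbb{E}[l^N_k] < \infty$, and an analogous argument on the coupled $2\times 2$ system controlling $(\mathbb{E}[l^N_k], \mathbb{E}[a^N_k])$ in the Erlang case closes exactly under $\alpha \mathbb{E}[\zeta] < \beta^2$. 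Markov's inequality then delivers compact containment in $D_{\mathbb{R}_+}[0,+\infty)$ (resp.\ $D_{\mathbb{R}_+^2}[0,+\infty)$), and well-posedness of the limiting martingale problem is standard under the integrability assumption on $\nu$ and the stability hypothesis.

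I expect the main obstacle to be the Erlang tightness: the $2 \times 2$ linear system governing the moments of $(l^N_k, a^N_k)$ has genuine off-diagonal interaction, and one must verify that its spectral radius becomes strictly less than $1$ precisely when $\alpha \mathbb{E}[\zeta] < \beta^2$ (the relevant limit generator matrix has eigenvalues $-\beta \pm \sqrt{\alpha \mathbb{E}[\zeta]}$). A secondary technicality will be to justify working with an unbounded core (e.g.\ polynomially growing test functions), or equivalently to localize on compacts, so that the generator convergence transfers to process-level convergence on the non-compact state space $\mathbb{R}_+$ (resp.\ $\mathbb{R}_+^2$).
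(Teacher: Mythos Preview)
Your proposal is correct and follows the classical Ethier--Kurtz scheme, but it takes a genuinely different route from the paper. The paper stays entirely inside the Feller framework on $\hat C(E)$: it first verifies that the limiting semigroup is Feller, identifies $\hat C^2_c(E)$ as a core, and then proves that $(\mathcal T^N f-f)/h_N\to\mathcal A f$ in the \emph{uniform norm on all of $E$}, not merely on compacts. The device that makes this work is the split by the indicator $\mathds 1_{yh<1}$ versus $\mathds 1_{yh\ge 1}$: on the first set the Taylor estimate gives an $O(h)$ remainder bounded independently of $y$ (because $f$ has compact support, so functions like $y^k f''(\theta_y)$ are themselves compactly supported), while on the second set one checks that for $N$ large both $\mathcal T^N f$ and $\mathcal A f$ vanish identically. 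With genuine sup-norm convergence in hand, the paper can invoke Ethier--Kurtz's Theorem~2.7 (p.~168) directly, which delivers Skorokhod convergence with no separate tightness or compact-containment argument.

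Your route instead trades the Feller verification for moment bounds plus well-posedness of the martingale problem. This is equally legitimate, but it forces you to do the work you flag as the main obstacle: controlling the coupled $2\times 2$ moment system in the Erlang case and checking that its spectral radius is below $1$ precisely under $\alpha\mathbb E[\zeta]<\beta^2$. The paper's approach sidesteps that computation entirely, at the cost of having to prove the Feller property of the limit semigroup (which it does by hand). In short: the paper buys simplicity on the tightness side by paying on the semigroup side; you do the reverse.
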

\begin{rmrk}
	Normally the intensity is a c\`agl\`ad process because it should be predictable (beyond the scope of this paper) but we work with the c\`adl\`ag version because the convergence results that we have in \cite{MR838085} as well as the Markov generator expression in \cite{MR2719785} are for the c\`adl\`ag version.\\
	Therefore we make the change $\lambda_t\leftarrow \lambda_{t+}=\lim _{\delta \shortdownarrow 0} \lambda_{t+\delta}.$
\end{rmrk}
\section{Preliminary Results}
\subsection{General Notations and Lemmas}
We denote by ${\mathbb R_+}=[0,+\infty)$ and we set $E=\mathbb R_+$ or $\mathbb R^2_+$.  $\hat C(E)$ the space of real continuous functions on $E$ vanishing at infinity.\\

$D_{E}[0,+\infty)$ refers to the set of all right continuous with left limits (c\`adl\`ag) functions  $x:[0,+\infty) \rightarrow E$.\\
On the other hand, c\`agl\`ad is used to refer to left continuous functions with right limits.
\begin{lmm}
	$({\mathbb R_+},|.|)$ is locally compact for the topology induced by the absolute value.
\end{lmm}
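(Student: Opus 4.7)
The plan is to verify local compactness directly from the definition: every point of $\mathbb{R}_+$ must possess a compact neighborhood in the subspace topology inherited from $(\mathbb{R},|\cdot|)$. I would split the argument into two cases depending on whether the point sits in the interior or on the boundary of $\mathbb{R}_+$.

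For $x > 0$, I would pick some $\varepsilon \in (0,x)$ so that the interval $[x-\varepsilon, x+\varepsilon]$ lies entirely inside $\mathbb{R}_+$. This set is closed and bounded in $\mathbb{R}$, hence compact by the Heine--Borel theorem, and it contains the open neighborhood $(x-\varepsilon, x+\varepsilon)$ of $x$ in $\mathbb{R}_+$. For the boundary point $x=0$, the interval $[0,1]$ is compact by the same reasoning, and it contains $[0,1/2)$, which is a neighborhood of $0$ in the subspace topology on $\mathbb{R}_+$.

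An alternative and slightly more structural route is to invoke the general fact that a closed subset of a locally compact Hausdorff space is itself locally compact. Since $\mathbb{R}$ is locally compact Hausdorff and $\mathbb{R}_+=[0,+\infty)$ is closed in $\mathbb{R}$, the statement follows at once. There is essentially no real obstacle here; the only point deserving a moment's care is that at $x=0$ one must work with the one-sided neighborhood $[0,\varepsilon)$, which is open in $\mathbb{R}_+$ (even though not open in $\mathbb{R}$) by definition of the subspace topology, so the argument goes through uniformly.
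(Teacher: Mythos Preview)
Your proposal is correct and follows essentially the same approach as the paper: split into the cases $x>0$ and $x=0$, and in each case exhibit a closed bounded interval as a compact neighbourhood in the subspace topology. The only extra content you add is the alternative observation that $\mathbb{R}_+$ is closed in the locally compact Hausdorff space $\mathbb{R}$, which the paper does not mention but which is of course also valid.
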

\begin{proof}
	$(\mathbb R,|.|)$ is locally compact: every point has a compact neighbourhood.\\
	The topology induced on ${\mathbb R_+}$ is simply the set $Top^+=\{{\mathbb R_+}\cap O, O\in Top\}$ with $Top$ being the usual topology on $\mathbb R$. Thus $[0,1)$ is an open set containing $0$ for $({\mathbb R_+},|.|)$, which means that $[0,1]$ is a compact neighbourhood of $0$. Any $x>0$ has a compact neighbourhood $[x-\epsilon,x+\epsilon]$ for $\epsilon$ small enough.
\end{proof}
\begin{lmm}
	$(\hat C ({\mathbb R_+}),\|.\|)$ is a Banach space for $\|f\|=\sup_{x\in {\mathbb R_+}}|f(x)|.$
\end{lmm}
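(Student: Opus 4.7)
The plan is to verify the three standard requirements: that $\|\cdot\|$ is a well-defined norm, that $\hat C(\mathbb R_+)$ equipped with pointwise operations is a vector space, and that it is complete with respect to $\|\cdot\|$. For well-definedness, I would first note that any $f \in \hat C(\mathbb R_+)$ is bounded: the vanishing at infinity yields some $R>0$ such that $|f(x)|<1$ for $x>R$, and continuity on the compact $[0,R]$ gives boundedness there, so $\|f\|<+\infty$. The norm axioms (non-degeneracy, absolute homogeneity, triangle inequality) and the vector space closure of $\hat C(\mathbb R_+)$ under addition and scalar multiplication transfer directly from their pointwise versions on $\mathbb R$.

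The substantive step is completeness. Given a Cauchy sequence $(f_n)_n \subset \hat C(\mathbb R_+)$, for every fixed $x \in \mathbb R_+$ the scalar sequence $(f_n(x))_n$ is Cauchy in $\mathbb R$, so the pointwise limit $f(x) := \lim_{n\to+\infty} f_n(x)$ exists. Passing to the limit in $m$ inside the Cauchy estimate $\|f_n-f_m\|\leq \varepsilon$ promotes this to a uniform bound $\sup_{x\in\mathbb R_+}|f_n(x)-f(x)|\leq \varepsilon$ for $n$ large, so $f_n \to f$ uniformly on $\mathbb R_+$.

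The final task is to show that $f$ actually lies in $\hat C(\mathbb R_+)$. Continuity of $f$ follows from the classical fact that a uniform limit of continuous functions is continuous. For the decay at infinity, I would fix $\varepsilon>0$, pick $n$ such that $\|f-f_n\|<\varepsilon/2$, and then use $f_n \in \hat C(\mathbb R_+)$ to choose $R>0$ with $|f_n(x)|<\varepsilon/2$ for all $x>R$; the triangle inequality then yields $|f(x)|<\varepsilon$ for $x>R$, so $f$ vanishes at infinity. Combined with uniform convergence, this establishes $f_n \to f$ in norm and completes the proof. I do not anticipate a real obstacle: this is a textbook argument and the only point requiring any care is the extraction of the decay of the limit $f$ from a single sufficiently close approximant $f_n$, which is purely a triangle-inequality manipulation.
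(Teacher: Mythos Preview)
Your proposal is correct and follows essentially the same route as the paper: pointwise limit from completeness of $\mathbb R$, upgrade to uniform convergence by passing to the limit in the Cauchy estimate, then recover continuity and decay at infinity of the limit via the triangle inequality against a single close approximant $f_n$. The only cosmetic differences are that you add the (trivial) check that $\|\cdot\|$ is finite and a norm, and you cite the uniform-limit-of-continuous-functions fact rather than spelling out the $\varepsilon/3$ argument as the paper does.
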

\begin{proof}
	Let $(f_n)_{n\in \mathbb N}$ be a Cauchy sequence in $\hat C ({\mathbb R_+})$. Let $\epsilon >0$, there exists $M$ such that $\forall n,p \geq M$, $\|f_n-f_p\|\leq \epsilon$. Set $x \in {\mathbb R_+}$, $|f_n(x)-f_p(x)|\leq \|f_n-f_p\|\leq \epsilon$ for $n,p \geq M$. Since $\mathbb R$ is complete, $\big(f_n(x)\big)_{n\in \mathbb N}$ converges for every $x\in {\mathbb R_+}.$ We call the point-wise limit $f(x)$. Set $p\geq M$ and $x\in {\mathbb R_+}$,
	\begin{align*}
	|f_p(x)-f(x)|&=|f_p(x)-\lim _{n\rightarrow +\infty} f_n(x)|,\\
	&=\lim _{n\rightarrow +\infty}|f_p(x)-f_n(x)|,\\
	&\leq \lim _{n\rightarrow +\infty} \|f_n-f_p\|,
	\end{align*}
	and since $n\geq M$ (it goes to infinity) we have $|f_p(x)-f(x)|\leq \epsilon$. Because $M$ is independent from $x$ we have the the uniform convergence $\|f_p-f\|\leq \epsilon$.\\
	Let $n$ be such that $\|f_n-f\|\leq \epsilon$ and $K$ such that $|f_n(x)|\leq \epsilon$ if $x>K$ (remember that the functions vanish at infinity). For all $x\in {\mathbb R_+}$, by the triangle inequality $$|f(x)|\leq |f(x)-f_n(x)|+|f_n(x)|\leq \|f-f_n\|+|f_n(x)|.$$
	If $x>K$ then $|f(x)|\leq 2\epsilon$, which means that $f$ vanishes at infinity.\\
	
	To prove the continuity of the limit function, let $a\in {\mathbb R_+}$ and $n$ such that $\|f_n-f\|<\epsilon$. $f_n$ is continuous at $a$ therefore there exists $\eta >0$ such that $|x-a|< \eta \Longrightarrow |f_n(x)-f_n(a)|<\epsilon.$ By the triangle inequality:
	\begin{align*}
	|f(x)-f(a)|&\leq |f_n(x)-f(x)|+|f_n(x)-f_n(a)|+|f(a)-f_n(a)|,\\
	&\leq 2\|f_n-f\|+|f_n(x)-f_n(a)|,
	\end{align*}
	thus $|f(x)-f(a)|\leq 3 \epsilon$ if $|x-a|< \eta$.\\
	In conclusion, $f$ is continuous and vanishes at infinity thus $\hat C({\mathbb R_+})$ is a Banach space.
\end{proof}
From now on the convergence in $\hat C(E)$ refers to the convergence in the uniform norm $\|f\|=\sup_{x\in E}|f(x)|.$

\begin{lmm}\label{density}
	\label{density2}
	The set of twice continuously differentiable functions with compact support $\hat C^2_c({\mathbb R_+})$ is dense in $\hat C ({\mathbb R_+})$ for the norm $\|.\|$. 
\end{lmm}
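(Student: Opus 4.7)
The approach is the standard two-step approximation: first truncate the function to compact support by multiplication with a continuous cutoff, then smooth it by convolution with a $C^2$ mollifier. The only subtlety is that elements of $\hat C({\mathbb R_+})$ are not required to vanish at the boundary point $0$, so the mollification step must be set up to avoid creating a discontinuity there.

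Fix $f \in \hat C({\mathbb R_+})$ and $\varepsilon > 0$. Since $f$ vanishes at infinity, choose $K > 0$ such that $|f(x)| < \varepsilon/2$ for every $x \geq K$, and let $\chi : {\mathbb R_+} \to [0,1]$ be a continuous cutoff equal to $1$ on $[0,K]$ and to $0$ on $[K+1,+\infty)$ (for instance affine on $[K,K+1]$). Then $g := f \chi$ is continuous with compact support in $[0,K+1]$ and $\|f - g\| \leq \varepsilon/2$.

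Next, extend $g$ to all of $\mathbb R$ by even reflection $\tilde g(x) := g(|x|)$, which yields a continuous, compactly supported, hence uniformly continuous function on $\mathbb R$. Pick $\rho \in C^2_c(\mathbb R)$ with $\rho \geq 0$, $\int_{\mathbb R} \rho = 1$ and $\mathrm{supp}(\rho) \subset [-1,1]$, and set $\rho_\delta(\cdot) := \delta^{-1}\rho(\cdot/\delta)$. By uniform continuity of $\tilde g$, for $\delta$ small enough, $\|\tilde g - \tilde g \ast \rho_\delta\|_\infty < \varepsilon/2$ on $\mathbb R$, and in particular on ${\mathbb R_+}$. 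The convolution $\tilde g \ast \rho_\delta$ inherits $C^2$ regularity from $\rho$ via differentiation under the integral sign, and has support contained in $[-K-1-\delta, K+1+\delta]$; its restriction to ${\mathbb R_+}$ therefore lies in $\hat C^2_c({\mathbb R_+})$. The triangle inequality then gives an $\varepsilon$-approximation of $f$ in this subspace.

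The one step requiring care, and really the only obstacle, is the behaviour at $0$: extending $g$ by zero to $\mathbb R$ rather than by even reflection would in general create a jump discontinuity at the origin whenever $g(0) \neq 0$, and the mollified function would then converge pointwise to $g(0)/2$ at $0$, breaking uniform convergence there. The even reflection eliminates this at no cost, and all other parts of the argument are routine.
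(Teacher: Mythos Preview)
Your proof is correct and the handling of the boundary point $0$ via even reflection is the right fix; without it the mollification would indeed fail to approximate uniformly near the origin.

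The paper, however, takes a genuinely different route: it invokes the locally compact form of the Stone--Weierstrass theorem. It first builds a family of smooth cutoff functions $\psi_K$ (equal to $1$ on $[0,K]$, vanishing on $[K+1,\infty)$), then observes that $\hat C^2_c(\mathbb R_+)$ is a subalgebra of $\hat C(\mathbb R_+)$, and uses scaled versions of the $\psi_K$ to verify that this subalgebra separates points and vanishes nowhere. Density then follows abstractly. Your argument is more hands-on and constructive---it actually produces an approximant---and avoids any appeal to Stone--Weierstrass, at the cost of the small boundary subtlety you flagged. The paper's argument is shorter once Stone--Weierstrass is granted, and it extends almost verbatim to the $\mathbb R_+^2$ case treated in the next lemma (where the paper simply uses tensor products $f(x)g(y)$ of the same cutoffs), whereas your mollification approach would require a two-dimensional even reflection across both coordinate axes.
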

\begin{proof}
	Let $K \in \mathbb R_+.$
	Take a non-negative infinitely differentiable function $\phi_k$ with a compact support $[K,K+1]$.\\
	$\phi_K$ is integrable and one can define $\psi_K (x)=\frac{1}{\int_0^{+\infty}\phi_K(t)\xdif t}\int_x^{+\infty}\phi_K (t)\xdif t$, an infinitely differentiable function. 
	\begin{equation*}
	\begin{cases}
	\psi_K(x)=\frac{1}{\int_0^{+\infty}\phi_K(t)\xdif t}\int_x^{+\infty}\phi_K (t)\xdif t=1 & \text{if } x<K,\\
	\psi_K(x) \in [0,1] &\text {if } x\in [K,K+1],\\
	\psi_K(x)=\frac{1}{\int_0^{+\infty}\phi_K(t)\xdif t}\int_x^{+\infty}\phi_K (t)\xdif t=0 & \text{if }x>K+1.
	\end{cases}
	\end{equation*}
	Here is an illustration of $\psi_K$:
	\begin{figure}[h!]
		\centering
		\includegraphics[width=100mm]{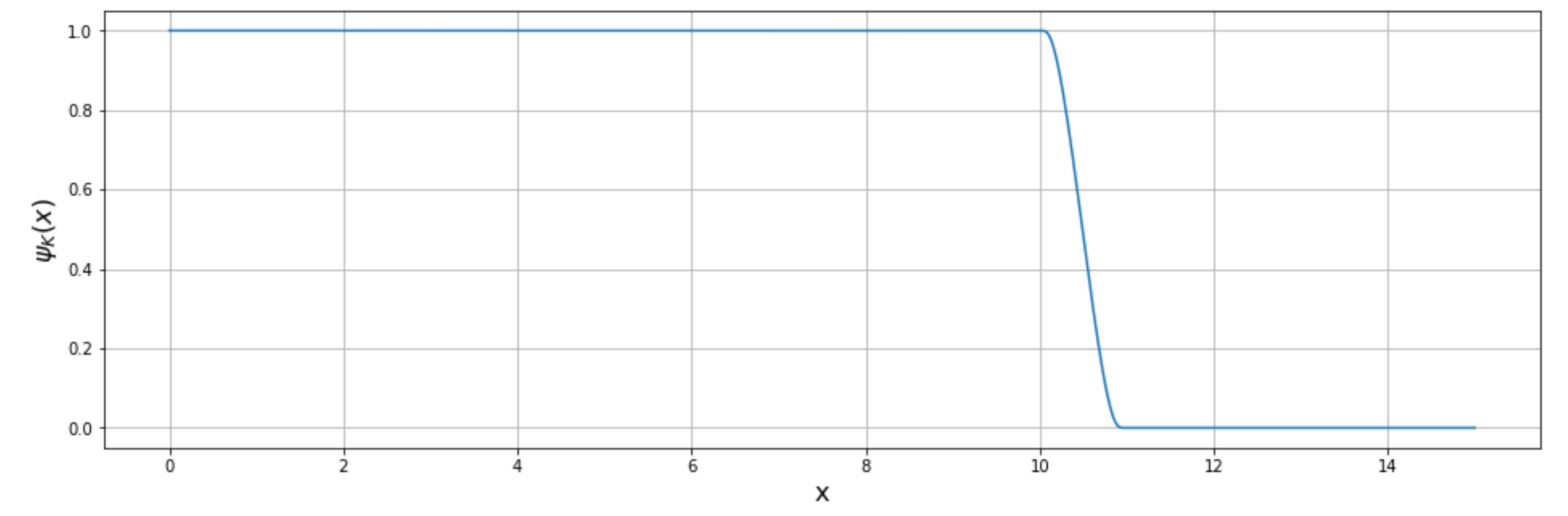}
		\caption{$\psi_K$ is constructed using $\phi_K(x)=\exp(\frac{-1}{1-(2\cdot x-2\cdot K+1)^2})
			$ and $K=10$. }
		\label{fig:my_label}
	\end{figure}
	$\hat C^2_c({\mathbb R_+})$ is clearly a sub-algebra of $\hat C ({\mathbb R_+})$. We prove its density using the locally compact version of the Stone-Weierstrass Theorem.
	\begin{itemize}
		\item Let $c \neq c'$ be two elements of $\mathbb R_+^2$. Assume, without loss of generality that $c < c'$. Set $f(x)=\psi_{cn}(xn)$ where $n$ is such that $\frac{1}{n}< c'-c$.\\
		Clearly $f\in \hat C^2_c({\mathbb R_+})$ and $f(c)=1$ whereas $f(c')=0$. 
		\item For any $c \in \mathbb R_+$, the last function guarantees that $f(c)\neq 0$ thus $\hat C^2_c({\mathbb R_+})$ vanishes nowhere.
	\end{itemize}
	We conclude that $\hat C^2_c({\mathbb R_+})$ is dense in $\hat C ({\mathbb R_+})$ for the norm $\|.\|$.
\end{proof}
\begin{lmm}
	The set of twice differentiable functions with compact support $\hat C^2_c({\mathbb R_+^2})$ is dense in $\hat C ({\mathbb R_+^2})$ for the norm $\|.\|$. 
\end{lmm}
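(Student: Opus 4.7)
The plan is to mirror the argument of Lemma \ref{density}, applying the locally compact version of the Stone-Weierstrass theorem to the subalgebra $\hat C^2_c(\mathbb R_+^2)$ of $\hat C(\mathbb R_+^2)$. First I would observe that $\mathbb R_+^2$ is locally compact Hausdorff for the product topology, since a product of one-dimensional compact neighborhoods (provided by the first lemma of Section 2) furnishes a compact neighborhood of any point. Next, $\hat C^2_c(\mathbb R_+^2)$ is clearly a sub-algebra: it is stable under addition, scalar multiplication, and pointwise products, because the pointwise product of two compactly supported $C^2$ functions is again compactly supported and twice continuously differentiable.

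The remaining tasks are to verify point separation and non-vanishing, which I would do by tensor products of the one-dimensional cutoffs $\psi_K$ built in the proof of Lemma \ref{density}. Given two distinct points $(a_1,a_2)\neq (b_1,b_2)\in\mathbb R_+^2$, at least one coordinate differs; assume without loss of generality $a_1\neq b_1$. Using Lemma \ref{density}, pick $g\in\hat C^2_c(\mathbb R_+)$ with $g(a_1)=1$ and $g(b_1)=0$, and $h\in\hat C^2_c(\mathbb R_+)$ with $h(a_2)\neq 0$. Setting $f(x,y)=g(x)h(y)$ defines an element of $\hat C^2_c(\mathbb R_+^2)$ with $f(a_1,a_2)=h(a_2)\neq 0$ and $f(b_1,b_2)=0$, so $f$ separates the two points and is nonzero at $(a_1,a_2)$. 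For non-vanishing at an arbitrary $(c_1,c_2)$, the same tensor product with $g(c_1)\neq 0$ and $h(c_2)\neq 0$ works. The locally compact Stone-Weierstrass theorem then yields density of $\hat C^2_c(\mathbb R_+^2)$ in $(\hat C(\mathbb R_+^2),\|\cdot\|)$.

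I do not anticipate any genuine obstacle: the proof is essentially identical to the one-dimensional version. The only subtlety worth flagging is that compact support must be ensured in both coordinates simultaneously, which is why we form a product $g(x)h(y)$ rather than use a function depending on only one variable (such a function would have support of the form $[0,M]\times \mathbb R_+$, which is not compact in $\mathbb R_+^2$).
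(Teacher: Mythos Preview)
Your proof is correct and follows the same Stone--Weierstrass strategy as the paper, which works with the subalgebra $\mathcal B$ of finite sums of tensor products $\sum_k f_k(x)g_k(y)$ with $f_k,g_k\in\hat C^2_c(\mathbb R_+)$. Your version is in fact slightly more careful: the paper's separating function $f(x,y)=\psi_{cn}(xn)$ depends only on the first coordinate and hence fails to have compact support in $\mathbb R_+^2$, precisely the pitfall you flag and avoid by multiplying by a second cutoff $h(y)$.
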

\begin{proof}
	The proof of this lemma is an extension of the previous one. Set
	$$\mathcal B = \{(x,y) \longrightarrow \sum_{k=1}^n f_k (x) g_k(y), n\in \mathbb N^*, (f_k,g_k) \in \hat C _2 ^c (\mathbb R_+)^2\}$$
	a sub-algebra of $\hat C ({\mathbb R_+^2})$ (that is stable by sum, product as well as scalar multiplication). In order to apply the Stone-Weierstrass Theorem one must make sure that $\mathcal B$ separates points and vanishes nowhere. 
	\begin{itemize}
		\item Let $X \neq X'$ be two vectors in $\mathbb R_+^2$. Assume, without loss of generality that their first components $c$ and $c'$ are such that $c < c'$. Set $f(x,y)=\psi_{cn}(xn)$ where $n$ is such that $\frac{1}{n}< c'-c$.\\
		Clearly $f\in \mathcal B$ and $f(X)=1$ where as $f(X')=0$. 
		\item For any $X \in \mathbb R_+^2$, the last function guarantees that $f(X)\neq 0$ thus $\mathcal B$ vanishes nowhere.
	\end{itemize}
	We conclude that $\mathcal B$ and a fortiori $\hat C^2_c({\mathbb R_+^2})$ is dense in $\hat C ({\mathbb R_+^2})$ for the norm $\|.\|$. 
\end{proof}

\subsection{General Results on Continuous Time Markov Processes}
\begin{dfntn}
	A family of bounded linear operators $\big(\mathcal T(t)\big)_{t\geq 0}$ on $\hat C (E)$ is called a semigroup if for each $s,t\geq 0$:
	\begin{itemize}
		\item $\mathcal T(t+s)=\mathcal T(t)\cdot \mathcal T(s),$
		\item $\mathcal T(0)=Id.$
	\end{itemize}
	A semigroup is called:
	\begin{itemize}
		\item A contraction semigroup if $\forall f \in \hat C (E)\text{ and } \forall t\geq 0, \|\mathcal T(t)f\|\leq \|f\|.$
		\item Strongly continuous if $\forall f \in \hat C (E), \lim _{t\rightarrow 0}\|\mathcal T(t)f-f\|=0.$
		\item Conservative if $\mathcal T(t) \mathds 1_E = \mathds 1_E.$ Where $\mathds 1_E$ is the function that takes the value $1$ everywhere.
		\item Positive if $\forall t\geq 0\text{ and } \forall f \in \hat C (E) \text { such that } f\geq 0, \mathcal T(t)f\geq 0.$
		
	\end{itemize}
	If a semigroup has all the previous properties then it is called a Feller semigroup.
\end{dfntn}
\begin{dfntn}
	The infinitesimal generator $\mathcal A$ of a semigroup $\big(\mathcal T(t)\big)_{t\geq 0}$ on $\hat C(E)$ is the linear operator defined by:
	$$\mathcal A f =\lim _{t\rightarrow 0}\frac{\mathcal T(t)f-f}{t},$$
	whenever the limit exists in $\hat C(E)$.\\
	The domain $\mathcal D(\mathcal A)$ is the subset of the functions $f\in \hat C (E)$ for which the limit exists.
\end{dfntn}
\begin{dfntn}\label{core}
	Let $\mathcal A$ be the generator of a Feller semigroup $\mathcal T(t)$ on $\hat C(E)$. Let $D$ be a dense subspace of $\hat C(E)$ with $D \subset \mathcal D (\mathcal A)$. If $\mathcal T(t):D\longrightarrow D $ for all $t\geq 0$, then we say that $D$ is a core for $\mathcal A$.
\end{dfntn}
\begin{rmrk}
	The actual definition of a core is different (cf \cite{MR838085}, page 17), what we have just introduced above is merely a sufficient condition for a subset to be a core. It is sufficient for our application nevertheless.
\end{rmrk}
\subsection{Known Results on the Continuous Time Intensity}
\begin{thrm}
	\begin{enumerate}
		\item Let $(H_t)_{t\in [0,+\infty)}$ be a Hawkes process whose intensity $(\lambda_t)_{t\in [0,+\infty)}$ follows the Markov dynamics of equation \eqref{eq:intensity}. Then $(\lambda_t)_{t\in [0,+\infty)}$ is a Markov process whose semigroup
		$$\mathcal T_e (t) f(x)=\mathbb E[f(\lambda_t)|\lambda_0=x]$$
		is a well defined Feller semigroup that satisfies $\mathcal T_e(t):\hat C({\mathbb R_+})\rightarrow \hat C({\mathbb R_+})$.\\
		The domain of the generator is $\mathcal D(\mathcal A_e)=C^1(\mathbb R_+)$ 
		Moreover, the generator -defined on the set of continuously differentiable functions $C^1({\mathbb R_+})$- is
		$$\mathcal A_e f(\lambda)=\beta (\lambda_\infty - \lambda)f'(\lambda)+\lambda \int \big(f(\lambda+\alpha z)-f(\lambda)\big)d\nu (z).$$
		\item Let $(H_t)_{t\in [0,+\infty)}$ be a Hawkes process whose intensity $(\lambda_t)_{t\in [0,+\infty)}$ follows the Erlang dynamics of equation \eqref{eq:intensityE}. Then $(\lambda_t,\xi_t)_{t\in [0,+\infty)}$ (where $\xi$ is the auxiliary process) is a Markov process whose semigroup
		$$\mathcal T_E (t) f(x,y)=\mathbb E[f(\lambda_t,\xi_t)|\lambda_0=x,\xi_0=y]$$
		is a well defined Feller semigroup that satisfies $\mathcal T_E(t):\hat C({\mathbb R_+^2})\rightarrow \hat C({\mathbb R_+^2})$.\\
		The domain of the generator is $\mathcal D(\mathcal A_E)=C^1(\mathbb R_+^2)$ 
		Moreover, the generator -defined on the set of continuously differentiable functions $C^1({\mathbb R_+^2})$- is
		$$\mathcal A_E f(\lambda,\xi)=\big (\xi+ \beta (\lambda_\infty - \lambda)\big )\partial_\lambda f(\lambda,\xi)-\beta \xi \partial_\xi f(\lambda, \xi)+\lambda \int \big(f(\lambda,\xi+\alpha z)-f(\lambda,\xi)\big)d\nu (z).$$
	\end{enumerate}
\end{thrm}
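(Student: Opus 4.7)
The plan is to realise the intensity (respectively the intensity/auxiliary pair) as a piecewise deterministic Markov process (PDMP), read off the generator from its drift-plus-jump structure, and then verify the Feller properties directly from the probabilistic representation. I treat the exponential case first; the Erlang case is structurally identical after replacing $\mathbb{R}_+$ by $\mathbb{R}_+^2$.

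First I would construct $(\lambda_t)$ pathwise by alternating two pieces. Between events, $\lambda_t$ solves the linear ODE $\dot\lambda_t=\beta(\lambda_\infty-\lambda_t)$, so $\lambda_t$ is explicit in closed form and in particular stays in $\mathbb{R}_+$; at an event time, occurring with conditional rate $\lambda_t$, the intensity jumps by $\alpha\zeta$ with $\zeta\sim\nu$ independent of everything else. The Ogata-style thinning construction shows the process is well defined on $[0,T]$ (using $\alpha\mathbb{E}[\zeta]<\beta$ to rule out explosion), and the strong Markov property is inherited from this PDMP construction. The semigroup $\mathcal{T}_e(t)f(x)=\mathbb{E}[f(\lambda_t)\mid\lambda_0=x]$ is then a contraction ($|\mathcal{T}_e(t)f|\leq\|f\|$), positive, and conservative ($\mathcal{T}_e(t)\mathds{1}=\mathds{1}$) by inspection.

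Next I would establish the Feller mapping property $\mathcal{T}_e(t):\hat C(\mathbb{R}_+)\to\hat C(\mathbb{R}_+)$ and strong continuity. Continuity in $x$ follows from the continuous dependence of the flow between jumps on the initial condition together with the fact that, on the event of no jump in $[0,t]$ (which has positive probability), the map $x\mapsto\lambda_t$ is smooth; a coupling argument (using the same Poisson driving source for neighbouring starting points) upgrades this to continuity of $x\mapsto\mathbb{E}[f(\lambda_t)]$. Vanishing at infinity uses the pathwise lower bound $\lambda_t\geq\lambda_\infty+(x-\lambda_\infty)e^{-\beta t}\to+\infty$ as $x\to\infty$, so that $f(\lambda_t)\to 0$ uniformly in $\omega$ (by dominated convergence since $f\in\hat C$). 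For strong continuity, by Lemma \ref{density} it suffices to check it on $\hat C^2_c(\mathbb{R}_+)$, where Taylor expansion together with the PDMP decomposition below gives $\|\mathcal{T}_e(t)f-f\|=O(t)$.

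For the generator I would apply Dynkin's formula to $f\in\hat C^2_c(\mathbb{R}_+)$. Decomposing the process into its drift and jump parts yields
\begin{equation*}
f(\lambda_t)-f(\lambda_0)=\int_0^t\beta(\lambda_\infty-\lambda_s)f'(\lambda_s)\,\xdif s+\sum_{\theta_i\leq t}\bigl(f(\lambda_{\theta_i-}+\alpha\zeta_i)-f(\lambda_{\theta_i-})\bigr).
\end{equation*}
Compensating the jump term against its predictable compensator $\int_0^t\lambda_s\int(f(\lambda_s+\alpha z)-f(\lambda_s))\,\xdif\nu(z)\,\xdif s$, taking expectation, dividing by $t$ and letting $t\downarrow 0$ produces the announced formula for $\mathcal{A}_e f$. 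The same argument applied to $(\lambda_t,\xi_t)$ in the Erlang case yields $\mathcal{A}_E$, the two partial derivative terms coming from the deterministic flow $\dot\lambda=\xi+\beta(\lambda_\infty-\lambda)$, $\dot\xi=-\beta\xi$ and the integral term from the jumps of $\xi$.

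The main obstacle is the identification of the domain, i.e.\ upgrading the pointwise computation above to a uniform limit in $\hat C$ and proving that every $C^1$ function (vanishing appropriately at infinity) belongs to $\mathcal{D}(\mathcal{A}_e)$. The upper bound is easy: one checks that $\mathcal{A}_e f$ as written lies in $\hat C$ for $f\in\hat C^1$ (using $\|\phi\|_1\mathbb{E}[\zeta]<\infty$). The converse, namely that $(\mathcal{T}_e(t)f-f)/t\to\mathcal{A}_ef$ uniformly in $x$, is delicate because the jump rate $\lambda$ is unbounded in $x$; the standard way around this is Hille--Yosida combined with a truncation argument where $\lambda$ is capped at level $R$, the generator identification is performed for the truncated process on $\{\lambda\leq R\}$, and then $R\to\infty$. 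Alternatively one can invoke the general PDMP generator theorem of Davis, for which the required boundary and integrability assumptions are readily verified here.
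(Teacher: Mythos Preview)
Your proposal is correct and broadly parallels the paper's argument for the Feller properties, but the two differ in where they outsource the work and in the level of care at one delicate point.

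The paper does not derive the Markov property or the generator formula at all: it simply cites Errais--Giesecke--Goldberg for the exponential case and Duarte--L\"ocherbach--Ost for the Erlang case, and spends its effort only on checking that $\mathcal T(t)$ is Feller. You instead reconstruct everything from the PDMP structure and obtain the generator via Dynkin/compensation; this is more self-contained and explains \emph{why} the formula holds, at the cost of the domain identification issue you flag (which the paper sidesteps by citation). For the Feller checks the two routes are close: both use the pathwise lower bound $\lambda_t\ge\lambda_\infty+(x-\lambda_\infty)e^{-\beta t}$ for vanishing at infinity, and both reduce strong continuity to $\hat C^2_c$ by density. The paper's strong continuity argument is slightly more explicit than yours---it applies the mean value theorem to $f_n(\lambda_t)-f_n(x)$, bounds the increment by $M'(1-e^{-\beta t})+M\,\mathbb E[H_t]$, and quotes the closed-form $\mathbb E[H_t]$ from Dassios--Zhao to send $t\to 0$---whereas you just assert $O(t)$.

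One place where your version is arguably more honest: for continuity in $x$ the paper writes $\lambda_t=\lambda_\infty+(x-\lambda_\infty)e^{-\beta t}+\int_{[0,t)}e^{-\beta(t-s)}\,\xdif L_s$ and applies dominated convergence as if the stochastic integral were independent of $x$, which tacitly presupposes exactly the common-driving-noise coupling you describe. Your explicit coupling argument fills that gap.
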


\begin{proof}
	\begin{enumerate}
		\item If the kernel is an exponential function
		For the proof that $(\lambda_t)_{t\in [0,+\infty)}$ is a Markov process and the expression of its generator we refer to \cite{MR2719785}, section $2.3$.\\
		However, we prove that $\mathcal T(t):\hat C({\mathbb R_+}) \rightarrow \hat C({\mathbb R_+})$ is a Feller semigroup.\\
		Let $t\geq 0$ and $f\in \hat C({\mathbb R_+})$. Start by showing that $\mathcal T(t)f $ is continuous. To do so, let $x\in {\mathbb R_+}$ and a sequence $\epsilon_n\rightarrow 0$ ($\epsilon_n$ must be positive if $x=0$).
		\begin{align*}
		\mathcal T (t)f(x+\epsilon_n)&=\mathbb E[f(\lambda_t)|\lambda_0=x+\epsilon_n],\\
		&=\mathbb E\big[f(\lambda_{\infty}+(x+\epsilon_n-\lambda_{\infty})e^{-\beta t}+\int_{[0,t)}e^{-\beta (t-s)}\xdif L_s)\big].
		\end{align*}
		Since $f$ is continuous and $x+\epsilon_n \rightarrow x$, and given that $f(\lambda_{\infty}+(x+\epsilon_n-\lambda_{\infty})e^{-\beta t}+\int_{[0,t)}e^{-\beta (t-s)}\xdif L_s) \leq \|f\| \in L^1$, one can apply the Dominated Convergence Theorem to conclude that: $$\mathcal T f(x+\epsilon_n)\rightarrow_{n\rightarrow+\infty} \mathcal T f(x).$$
		To prove that $\mathcal T(t)f $ vanishes at infinity we start by setting $\epsilon>0$ and we take $K$ such that $x>K $ implies $|f(x)|\leq \epsilon$. If $x>(K-\lambda_\infty)e^{\beta t}+\lambda_{\infty}$ one has
		$$|f(\lambda_{\infty}+(x+\epsilon_n-\lambda_{\infty})e^{-\beta t}+\int_{[0,t)}e^{-\beta(t-s)}\xdif L_s)|\leq \epsilon.$$
		Thus $|\mathcal T (t)f(x)|\leq \epsilon$ and $\mathcal T(t) : \hat C({\mathbb R_+}) \rightarrow \hat C({\mathbb R_+}).$\\
		Now we prove that the semigroup is Feller.
		\begin{itemize}
			\item Let $f\in \hat C({\mathbb R_+}),$ for all $x\in {\mathbb R_+}$ we have $ |\mathcal T(t)f(x)|\leq \mathbb E[|f(\lambda_t)||\lambda_0=x]\leq \mathbb E [\|f\||\lambda_0=x]\leq \|f\|.$
			Thus $T$ is a contraction.
			\item $\mathcal T(t)\mathds 1_{\mathbb R_+} (x)= \mathbb E [\mathds 1_{\mathbb R_+} (\lambda_{\infty}+(x-\lambda_{\infty})e^{-\beta t}+\int_{[0,t)}e^{-\beta (t-s)}\xdif L_s)|\lambda_0=x]=\mathds 1_{\mathbb R_+}(x)$. Thus $\mathcal T$ is conservative.
			\item If $f\geq 0$ then clearly $\mathbb E[f(\lambda_t)|\lambda_0=x]\geq 0 .$ Thus $\mathcal T$ is positive.
			\item To prove strong continuity, we start by taking $\epsilon>0$, $f\in \hat C ({\mathbb R_+})$ and $f_n\in \hat C^2_c({\mathbb R_+})$ such that $\|f_n-f\|\leq \epsilon$ (cf lemma \ref{density2}). Since we will make $t\rightarrow 0$ it is possible to assume $t<1$.\\
			Using Jensen's inequality:
			\begin{align*}
			\|\mathcal T(t)f_n-f_n\|&=\sup_{x\in {\mathbb R_+}}\big|\mathbb E[f_n(\lambda_{\infty}+(x-\lambda_{\infty})e^{-\beta t}+\int_{[0,t)}e^{-\beta (t-s)}\xdif L_s)-f_n(x)]\big|,\\
			&\leq \sup_{x\in {\mathbb R_+}}\mathbb E[\big|f_n(\lambda_{\infty}+(x-\lambda_{\infty})e^{-\beta t}+\int_{[0,t)}e^{-\beta (t-s)}\xdif L_s)-f_n(x)\big|],\\
			&\leq \mathbb E[\sup_{x\in {\mathbb R_+}} \big|f_n(\lambda_{\infty}+(x-\lambda_{\infty})e^{-\beta t}+\int_{[0,t)}e^{-\beta (t-s)}\xdif L_s)-f_n(x)\big|],\\
			\end{align*}
			Now set $\alpha_x=\lambda_{\infty}+(x-\lambda_{\infty})e^{-\beta t}+\int_{[0,t)}e^{-\beta (t-s)}\xdif L_s$, since we assumed that $t<1$ we have $\lambda_\infty+(x-\lambda_\infty)e^{-\beta} \leq \inf(x,\alpha_x)$.\\
			Using the mean value theorem, there is $\lambda_\infty+(x-\lambda_\infty)e^{-\beta t}\leq \theta_x$ (random) such that 
			\begin{align*}
			f_n(\alpha_x)-f_n(x)&=(\alpha_x-x)\cdot f_n'(\theta_x),\\
			&=\big((x-\lambda_\infty)(e^{-\beta t}-1)+\int_{[0,t)}e^{-\beta (t-s)}\xdif L_s\big)\cdot f_n'(\theta_x).
			\end{align*}
			the function $f_n$ is $C^1$ with compact support. Therefore the following inequalities are obtained:
			\begin{enumerate}
				\item $|f'_n(x)|\leq M$ where $M\in [0,+\infty)$ (deterministic), for any $x\in {\mathbb R_+}$.
				\item $|(x-\lambda_\infty)f'_n(\theta_x)|\leq M'$ where $M'\in [0,+\infty)$ (deterministic), for any $x\in {\mathbb R_+}$. This is due to the fact that $\lambda_\infty+(x-\lambda_\infty)e^{-\frac{1}{\tau}}\leq \theta_x$ which imposes that if $x$ is too large, then $f'(\theta_x)=0$.
			\end{enumerate}
			Moreover, since $e^{-\beta (t-s)}\leq 1$ for $s\in [0,t)$ one has
			\begin{align*}
			\int_{[0,t)}e^{-\beta (t-s)}\xdif L_s &\leq \int_{[0,t)}1\xdif L_s,\\
			&= L_{t-},\\
			&\leq L_t.
			\end{align*}
		\end{itemize}
		
		Combining all these elements yields:
		\begin{align*}
		\|\mathcal T(t)f_n-f_n\|&\leq \mathbb E [\sup_{x\in {\mathbb R_+}}|\big((x-\lambda_\infty)(e^{-\beta t}-1)+\int_{[0,t)}e^{-\beta (t-s)}\xdif L_s\big)\cdot f_n'(\theta_x)|],\\
		&\leq \mathbb E [M'\cdot (1-e^{-\beta t})+M\cdot H_t].
		\end{align*}
		From \cite{MR3084573} we have an explicit expression for $\mathbb E[H_t]$ and we know that $\lim_{t\rightarrow 0} \mathbb E[H_t]=0$, thus the result for $f_n$. Now we extend it by density for the norm $\|\|$:
		\begin{align*}
		\|\mathcal T(t)f-f\|&=\|\mathcal T (t) (f-f_n)+\mathcal T(t)f_n-(f-f_n)-f_n\|,\\
		&\leq \|\mathcal T (t) (f-f_n)-(f-f_n)\|+\|\mathcal T(t)f_n-f_n\|,\\
		&\leq \|\mathcal T (t) (f-f_n)\|+\|(f-f_n)\|+\|\mathcal T(t)f_n-f_n\|.
		\end{align*}
		Finally, since $\mathcal T$ is a contraction, $\|\mathcal T (t) (f-f_n)\|\leq \|(f-f_n)\|$ and we conclude that $$\|\mathcal T(t)f-f\|\rightarrow 0.$$
		\item If the kernel is an Erlang function:\\
		The generator and its domain can be found in \cite{MR4044609}. All the other computations are identical to those of the exponential kernel case.
	\end{enumerate}
\end{proof}
\section{Proof of the Main Result}
The main result (Theorem \ref{MainResult}) is an immediate corollary of the following theorem (Theorem $2.7$ from \cite{MR838085} page 168):
\begin{thrm}\label{convergence}
	Let $E$ be locally compact and separable. For $N=1,2,\cdots$ let $\mu_N(x,\Gamma)$ be a transition function on $E\times \mathcal B (E)$ such that $\mathcal T_N$ defined by
	$$\mathcal T_N f(x)=\int f(y)\mu_N(x,dy),$$
	satisfies $\mathcal T_N:\hat C (E)\longrightarrow \hat C (E)$. Suppose that $\big(\mathcal T(t)\big)_{t\geq 0}$ is a Feller semigroup on $\hat C (E)$. Let $h_N>0$ satisfy $\lim _{N\rightarrow +\infty} h_N=0$ and suppose that for every $f\in \hat C (E)$, $$\lim _{N\rightarrow +\infty}\mathcal T^{\lfloor t/h_N\rfloor}_N f=\mathcal T(t)f, ~~ t\geq 0. $$
	For each $N\geq 1$, let $(Y^N_k)_{k\geq 0}$ be a Markov chain in $E$ with transition function $\mu_N(x,\Gamma)$ and suppose that $Y^N_0$ has a limiting distribution $\nu$. Define $X^N$ by $X^N_t= Y^N_{\lfloor t/h_N \rfloor}$.\\
	Then there is a Markov process $X$ corresponding to $\big(\mathcal T(t)\big)_{t\geq 0}$ with initial distribution $\nu $ and sample paths in $D_E[0,+\infty[$ and $X^N \Longrightarrow X$.
\end{thrm}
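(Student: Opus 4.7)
The plan is to combine standard Feller theory with the Prohorov-type characterization of weak convergence in the Skorokhod space: first construct the candidate limit $X$ from the Feller semigroup, then verify convergence of finite-dimensional distributions and tightness of $(X^N)_N$ in $D_E[0,+\infty)$, and finally identify the limit. Existence of a Markov process $X$ with initial law $\nu$, transition semigroup $(\mathcal T(t))_{t\geq 0}$ and sample paths in $D_E[0,+\infty)$ is itself a nontrivial but classical consequence of $E$ being locally compact and separable and $(\mathcal T(t))$ being Feller (Hille--Yosida plus the standard c\`adl\`ag regularization of Markov processes with strongly continuous contraction semigroup); I would cite this rather than re-derive it.

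For convergence of finite-dimensional distributions, fix $0=s_0<s_1<\cdots<s_m$ and $f_1,\ldots,f_m\in\hat C(E)$. Repeated use of the Markov property for $(Y^N_k)_k$ gives
$$\mathbb E\Bigl[\prod_{j=1}^m f_j(X^N_{s_j})\Bigr]=\mathbb E\Bigl[\mathcal T_N^{k^N_1}\!\bigl(f_1\cdot\mathcal T_N^{k^N_2}\bigl(f_2\cdots \mathcal T_N^{k^N_m}f_m\bigr)\bigr)(Y^N_0)\Bigr],$$
with $k^N_j=\lfloor s_j/h_N\rfloor-\lfloor s_{j-1}/h_N\rfloor$. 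The hypothesis only gives $\mathcal T_N^{\lfloor t/h_N\rfloor}f\to\mathcal T(t)f$ at fixed $t$, whereas here $k^N_j$ differs from $\lfloor (s_j-s_{j-1})/h_N\rfloor$ by at most one; I would upgrade the hypothesis to $\mathcal T_N^{k_N}f\to\mathcal T(s)f$ for any $k_N$ with $k_N h_N\to s$, using that the $\mathcal T_N$ are contractions on $\hat C(E)$ and that $t\mapsto\mathcal T(t)f$ is uniformly continuous on compact intervals by strong continuity. An induction on $m$, together with $Y^N_0\Rightarrow\nu$, then yields convergence of the finite-dimensional marginals to those of $X$.

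For tightness in $D_E[0,+\infty)$ I would use the criterion consisting of a compact containment condition plus an Aldous modulus-of-continuity condition. Compact containment is obtained by fixing $\chi\in\hat C(E)$ with $0\leq\chi\leq 1$ and $\chi\equiv 1$ on a large compact $K$: the Feller property gives $\mathcal T(t)\chi\in\hat C(E)$, and the hypothesis $\mathcal T_N^{\lfloor t/h_N\rfloor}\chi\to\mathcal T(t)\chi$ together with tightness of $Y^N_0$ allows one to make $\inf_N\mathbb P(X^N_t\in K\text{ for all }t\in[0,T]\cap\mathbb Q)$ arbitrarily close to $1$ by enlarging $K$, and then extend to all $t\in[0,T]$ by right-continuity of $X^N$. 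For the oscillation estimate, apply the strong Markov property at a bounded stopping time $\tau$ of $X^N$: for $f\in\hat C(E)$,
$$\mathbb E\bigl[|f(X^N_{\tau+\delta})-f(X^N_\tau)|\bigr]\leq\bigl\|\mathcal T_N^{\lfloor\delta/h_N\rfloor}f-f\bigr\|,$$
which tends to $0$ as $\delta\to 0$ uniformly in $N$ by combining the semigroup convergence with $\|\mathcal T(t)f-f\|\to 0$.

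Once finite-dimensional convergence and tightness are established, Prohorov's theorem yields that every subsequential weak limit is c\`adl\`ag with the prescribed finite-dimensional distributions, hence equal in law to $X$, and therefore $X^N\Rightarrow X$. The main obstacle I anticipate is the compact containment step: $E$ is only assumed locally compact, so we must transfer tightness from the (tight) family of initial laws to the entire family of processes uniformly in $N$, and it is here that the interplay between the Feller property of $\mathcal T$ and the hypothesized semigroup convergence is delicate. A close second is the Trotter-type promotion used in the finite-dimensional step, where one must carefully handle the fact that the approximating semigroups live at the discrete times $k h_N$ and convergence is stated only at fixed $t$.
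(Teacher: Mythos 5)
The paper never proves this statement: it is quoted verbatim from Ethier and Kurtz (\cite{MR838085}, Theorem 2.7, p.~168), and the authors only verify its hypotheses for their chains. So your attempt is in effect a reconstruction of the textbook proof, and your overall strategy (iterate the one-step operators to get convergence of finite-dimensional distributions, then prove tightness in $D_E[0,+\infty)$) is indeed the classical route. The finite-dimensional half of your outline is essentially fine: the off-by-one issue you flag is real but is handled by the Trotter--Kato machinery behind the paper's Lemma \ref{equivalence} (Theorem 6.5 of \cite{MR838085}), which upgrades the pointwise-in-$t$ hypothesis to convergence uniform on bounded time intervals; combined with the contraction property and strong continuity of $\mathcal T$ this gives $\mathcal T_N^{k_N}f\to\mathcal T(s)f$ whenever $k_Nh_N\to s$, and the induction over the time points together with $Y^N_0\Rightarrow\nu$ (and the fact that vague convergence to a probability law implies weak convergence) goes through.

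The tightness half, however, has two genuine gaps as written. (i) Compact containment: from the marginal convergence $\mathcal T_N^{\lfloor t/h_N\rfloor}\chi\to\mathcal T(t)\chi$ and tightness of the initial laws you only control $\mathbb P(X^N_t\notin K)$ at fixed times; you cannot conclude that $\inf_N\mathbb P(X^N_t\in K\ \forall t\in[0,T]\cap\mathbb Q)$ is close to $1$, since a union bound over countably many times (or over the $\sim N$ grid points, each contributing an error not known to be $o(h_N)$) gives nothing, and right-continuity cannot rescue a statement you only have at fixed times. One needs a maximal-type inequality (e.g.\ an optional-stopping/supermartingale argument with resolvent functions $f=(\lambda-\mathcal A)^{-1}g$), or one sidesteps the issue as in \cite{MR838085} by passing to the one-point compactification $E^{\Delta}$, extending the semigroups to functions on $E^{\Delta}$ (compact containment is then automatic) and using conservativeness of the Feller limit to exclude mass at $\Delta$. (ii) Your displayed Aldous estimate is false as stated: $\mathbb E\bigl[|f(X^N_{\tau+\delta})-f(X^N_\tau)|\bigr]$ cannot be bounded by $\bigl\|\mathcal T_N^{\lfloor\delta/h_N\rfloor}f-f\bigr\|$, because the absolute value sits inside the expectation while conditioning at $\tau$ only controls differences of conditional means; moreover the number of chain steps between $\tau$ and $\tau+\delta$ is random (off by one from $\lfloor\delta/h_N\rfloor$). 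The standard repair is to estimate $\mathbb E\bigl[(f(X^N_{\tau+\delta})-f(X^N_\tau))^2\bigr]$ by conditioning, which yields a bound of the form $\|\mathcal T_N^{m}f^2-f^2\|+2\|f\|\,\|\mathcal T_N^{m}f-f\|$, again controlled via the uniform-on-compacts convergence. With these repairs your route works, but the cleanest course --- and the one the paper takes --- is simply to cite \cite{MR838085}.
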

In this section we prove that the process $(\tilde \lambda_t^N)_{t\in [0,+\infty)}$ satisfies the conditions of Theorem \ref{convergence}.\\
In the context of this paper, $(Y^N_k)_{k\geq 0}=(l^N_k)_{k\geq 0}$ (or $(l^N_k,a^N_k)_{k\geq 0}$), $X^N=\tilde\lambda^N$ (or $(\tilde \lambda^N, \tilde \xi ^N)$).
\subsection {Initial Condition}
First of all, we fix $l^N_0=x$ (and $a^N_0=0$ if the kernel is an Erlang function) for some $x\in {\mathbb R_+}$ independently from $N$, thus $l^N_0$ does have a limiting distribution $\delta_x$.\\
\subsection{Convergence of the Operators}
Now the trickier part to prove is the convergence of the discrete one-step operator to the Feller semigroup associated with the Hawkes intensity. Unfortunately, we do not know that much about the semigroup nor about the composition of one-step operator with itself. That is why using generators is indispensable.\\
We start this part by mentioning the lemmas (from \cite{MR838085}) that will be used:
\begin{lmm}\label{equivalence}
	Let $L$ be a Banach functional space on $E$.\\
	
	For $N=1,2,\cdots$ let $\mathcal T_N$ be a linear contraction on $L$, let $h_N$ be a positive number and put $\mathcal A_N=h_N^{-1}(\mathcal T_N-Id)$. Assume that $\lim _{N\rightarrow +\infty}h_N=0$. Let $\big((\mathcal T(t))_{t\geq 0}\big)$ be a strongly continuous contraction semigroup on $L$ with generator $\mathcal A$ and let $D$ be a core for $\mathcal A$. Then the following are equivalent:
	\begin{enumerate}
		\item For each $f\in L, \mathcal T_N^{\lfloor t/h_N\rfloor}f\longrightarrow \mathcal T(t)f$ for all $t\geq 0$.
		\item For each $f \in D$ there exists $f_N \in L$ such that $f_N\longrightarrow f$ and $\mathcal A_Nf_N\longrightarrow \mathcal Af$.
	\end{enumerate}
\end{lmm}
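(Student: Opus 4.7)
The plan is to route the argument through resolvents, which provide the standard Trotter--Kato bridge between convergence of generators and convergence of semigroups. For each fixed $\lambda > 0$ I would first introduce the discrete resolvent
$$R^N_\lambda f := h_N \sum_{k=0}^{\infty} e^{-\lambda k h_N}\, \mathcal{T}_N^k f,$$
which converges in operator norm since $\mathcal{T}_N$ is a contraction, and then observe through elementary manipulation that $(\lambda\, Id - \mathcal{A}_N) R^N_\lambda = Id + \varepsilon_N$ with $\|\varepsilon_N\|\to 0$. Thus $R^N_\lambda$ is, up to a vanishing error, the resolvent of $\mathcal{A}_N$. On the continuous side the Hille--Yosida theorem gives $R_\lambda g := \int_0^{+\infty} e^{-\lambda t}\, \mathcal{T}(t) g \, \xdif t = (\lambda\, Id - \mathcal{A})^{-1}g$.

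For the direction $(1)\Rightarrow(2)$, the expression $R^N_\lambda g$ is recognized as a Riemann sum for $R_\lambda g$; using the uniform bound $\|\mathcal{T}_N^k\| \le 1$ and the pointwise convergence supplied by $(1)$, a dominated convergence argument yields $R^N_\lambda g \to R_\lambda g$ for every $g \in L$. Then for $f \in D \subset \mathcal{D}(\mathcal{A})$ one sets $g := \lambda f - \mathcal{A} f$ and $f_N := R^N_\lambda g$; clearly $f_N \to f$, and the identity $\mathcal{A}_N f_N = \lambda R^N_\lambda g - g - \varepsilon_N g$ delivers $\mathcal{A}_N f_N \to \lambda f - g = \mathcal{A} f$.

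The reverse direction $(2)\Rightarrow(1)$ is the substantive one. Starting from $(2)$, for $f \in D$ pick $f_N \to f$ with $\mathcal{A}_N f_N \to \mathcal{A} f$; then $(\lambda\, Id - \mathcal{A}_N) f_N \to (\lambda\, Id - \mathcal{A}) f$, and applying the uniformly bounded operator $R^N_\lambda$ yields $R^N_\lambda (\lambda\, Id - \mathcal{A}) f \to f$. Because $D$ is a core, $(\lambda\, Id - \mathcal{A})(D)$ is norm-dense in $L$ by the Hille--Yosida range condition, so an $\varepsilon/3$ argument extends this to $R^N_\lambda g \to R_\lambda g$ for every $g \in L$.

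The hard part is converting this resolvent convergence back into convergence of the iterates $\mathcal{T}_N^{\lfloor t/h_N \rfloor}$, which must be compared to $\mathcal{T}(t)$ rather than to $e^{t\mathcal{A}_N}$. The cleanest path proceeds through the Hille--Yosida exponential formula $\mathcal{T}(t) f = \lim_{\mu \to +\infty} e^{\mu t(\mu R_\mu - Id)} f$, paired with a discrete analogue obtained from Chernoff's product formula, equivalently from the elementary bound $\|(Id + h_N \mathcal{A}_N)^n f - e^{n h_N \mathcal{A}_N} f\| \le \sqrt{n}\, h_N \|\mathcal{A}_N f\|$ valid for any contraction. The main technical hurdle is commuting the two limits $N \to +\infty$ and $\mu \to +\infty$ uniformly in $t$ on compact intervals; the contractivity of each $\mathcal{T}_N$ and the strong continuity of $\mathcal{T}(t)$ are both used in an essential way, and once this double limit is handled one reads off $\mathcal{T}_N^{\lfloor t/h_N \rfloor} f \to \mathcal{T}(t) f$.
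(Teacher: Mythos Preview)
The paper does not actually prove this lemma: its entire proof reads ``Cf.\ \cite{MR838085} page 31,'' i.e.\ it quotes the Trotter--Kato approximation theorem from Ethier--Kurtz. Your proposal, by contrast, is a genuine outline of the classical resolvent proof of that theorem, so the two ``approaches'' are not really comparable---you are supplying what the paper merely cites.

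At the level of a sketch your argument is sound. The discrete resolvent $R^N_\lambda$ does satisfy $(\lambda\,Id-\mathcal A_N)R^N_\lambda = Id+\varepsilon_N$ with $\|\varepsilon_N\|\to 0$ (and, since $R^N_\lambda$ is a power series in $\mathcal T_N$, the same identity holds on the other side). The direction $(1)\Rightarrow(2)$ via $f_N:=R^N_\lambda(\lambda f-\mathcal A f)$ is correct, and for $(2)\Rightarrow(1)$ the passage to strong resolvent convergence using the density of $(\lambda-\mathcal A)(D)$ (which is precisely what the core hypothesis buys) together with the uniform bound $\sup_N\|R^N_\lambda\|<\infty$ is the right mechanism.

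The one place where the proposal is genuinely incomplete is the final step: upgrading strong resolvent convergence to convergence of the iterates $\mathcal T_N^{\lfloor t/h_N\rfloor}$. The Chernoff estimate you quote, $\|\mathcal T_N^{n}f-e^{nh_N\mathcal A_N}f\|\le \sqrt{n}\,h_N\|\mathcal A_N f\|$, does reduce matters to convergence of the bounded-generator semigroups $e^{t\mathcal A_N}$, but you still owe the passage from $R^N_\lambda\to R_\lambda$ to $e^{t\mathcal A_N}f\to\mathcal T(t)f$; your paragraph about ``commuting the two limits'' names the difficulty without resolving it. In Ethier--Kurtz this step is carried out by a direct Gronwall-type comparison of $\mathcal T_N^{k}f_N$ with $\mathcal T(kh_N)f$ along the orbit, driven by the hypothesis $\mathcal A_N f_N\to\mathcal A f$, rather than via the Yosida approximation. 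Either route can be made to work, but as written your sketch stops just short of the crux.
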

\begin{proof}
	Cf \cite{MR838085} page 31.
\end{proof}

\begin{lmm}
	$\hat C^2_c(E)$ is a core for $\mathcal A_{j}$, $j\in \{e,E\}$.
\end{lmm}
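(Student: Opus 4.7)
The plan is to verify the three sufficient conditions for a core from Definition \ref{core}: (i) $\hat C^2_c(E)$ is a dense subspace of $\hat C(E)$; (ii) $\hat C^2_c(E) \subset \mathcal D(\mathcal A_j)$; and (iii) $\mathcal T_j(t)$ sends $\hat C^2_c(E)$ into itself for every $t\geq 0$. Condition (i) is exactly Lemma \ref{density2} (for $E=\mathbb R_+$) and its two‑dimensional analogue. Condition (ii) is trivial, since the previous theorem identifies $\mathcal D(\mathcal A_j)=C^1(E)$ and $\hat C^2_c(E)\subset C^1(E)$. All the work therefore lies in (iii).

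For (iii), I would split the claim into preservation of compact support and preservation of $C^2$ regularity. Preservation of compact support is the same monotonicity argument already used in the Feller proof: the jumps of $\lambda_t$ (resp.\ of $\lambda_t$ and $\xi_t$) are non‑negative, so that $\lambda_t(x)\geq \lambda_\infty+(x-\lambda_\infty)e^{-\beta t}$ in the exponential case and analogous componentwise lower bounds hold in the Erlang case. If $\mathrm{supp}\, f\subset[0,K]^d$ and the initial condition is large enough, the trajectory exits the support almost surely and $\mathcal T_j(t)f$ vanishes there.

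Preservation of $C^2$ regularity is the real obstacle, because the random jump times of the Hawkes process themselves depend on the initial point through the intensity. The plan is to exploit the piecewise deterministic Markov process structure: decompose $\mathcal T_j(t)f=\sum_{n=0}^{\infty}\mathcal T_{j,n}(t)f$ with $\mathcal T_{j,n}(t)f(x)=\mathbb E[f(\lambda_t(x))\mathds 1_{H_t=n}]$, and use the first‑jump decomposition recursively to write each $\mathcal T_{j,n}(t)f$ as an integral of $f$ composed with smooth deterministic flows, weighted by a Radon–Nikodym density $\exp(-\Lambda_s(x))$ and hazard rates, both of which are smooth functions of $x$. Two differentiations under the integral sign then give explicit expressions for $\partial_x^{\leq 2}\mathcal T_{j,n}(t)f$.

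The hardest step will be justifying the interchange of the derivatives with the infinite sum over $n$: we need a uniform‑in‑$x$ bound on the $n$‑th derivative term that is summable. Here the compact‑support step is useful because it localises $x$ to a compact set, after which the stability condition $\alpha\mathbb E[\zeta]<\beta$ (resp.\ $<\beta^2$) gives exponential moment bounds on $H_t$ (as already invoked via \cite{MR3084573} in the strong continuity proof), and thus summability of the derivative bounds. Once uniform convergence of $\mathcal T_j(t)f$ and of its first two derivatives is established, $\mathcal T_j(t)f\in \hat C^2_c(E)$, giving (iii) and hence the core property.
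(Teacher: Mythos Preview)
Your three-step plan (density via Lemma~\ref{density2}, the trivial inclusion $\hat C^2_c(E)\subset C^1(E)=\mathcal D(\mathcal A_j)$, and invariance of $\hat C^2_c(E)$ under $\mathcal T_j(t)$) and your compact-support argument via the lower bound $\lambda_t\geq \lambda_\infty+(x-\lambda_\infty)e^{-\beta t}$ are exactly the paper's approach. The only divergence is in how the $C^2$ regularity of $\mathcal T_j(t)f$ is handled: the paper disposes of it in a single sentence, asserting that differentiation under the expectation is justified ``because $\|f'\|<+\infty$ and $\|f''\|<+\infty$'', and does not engage with the issue you correctly flag, namely that the jump times of $L_t$ themselves depend on the initial condition $x$ through the intensity. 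Your proposed route through the PDMP first-jump decomposition $\mathcal T_j(t)f=\sum_n\mathcal T_{j,n}(t)f$, with summability of the derivative bounds controlled by moments of $H_t$ on the compact set of relevant initial conditions, is the honest way to close this gap. In short, your argument is structurally the same as the paper's but supplies the justification that the paper's one-line claim omits.
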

\begin{proof}
	Let us start with the exponential kernel case. According to Definition \ref{core} one must show that $\mathcal T (t): \hat C^2_c({\mathbb R +})\longrightarrow \hat C^2_c({\mathbb R +})$, the density has been proven in Lemma \ref{density2}.\\
	Let $t\geq 0 $ and $f\in C^2_c({\mathbb R +})$. There exists $B>0$ such that $x\geq B \Rightarrow f(x)=0$.\\
	If $x\geq (B-\lambda_\infty)e^{\beta t}+\lambda_\infty,$ then $\lambda_t\geq \lambda_\infty+(x-\lambda_\infty)e^{-\beta t}\geq B$.\\ It follows that $\mathcal T(t)f(x)=\mathbb E (f(\lambda_t)|\lambda_0=x)=0$.\\
	The interchangeability of the derivative and the expectation is possible because $\|f'\|<+\infty$ and $\|f''\|<+\infty$, thus the (twice) differentiability of $\mathcal T(t)f$.\\
	If the kernel is an Erlang function, the computations are similar.
\end{proof}
\begin{prpstn}
	\label{onestep}
	$E=\mathbb R_+$ or $\mathbb R_+^2$.\\
	\begin{enumerate}
		\item We assume that the kernel is exponential. The one-step transition operator $\mathcal T^N_e$ associated to $l^N$ and evaluated at a function $f\in \hat C (E)$ is:
		\begin{align*}
		\mathcal T^N_e f(y):=&\mathbb E[f(l_{k+1}^N)|l_k^N=y],\\
		=&f\big(\lambda_\infty(1-e^{-\beta h}\big)+ye^{-\beta h})(1-yh)\mathds 1_{yh<1}\nonumber\\
		&+\int f\big(\lambda_\infty(1-e^{-\beta h})+(y+\alpha z)e^{-\beta h}\big)d\nu(z)(yh\mathds 1_{yh<1}+\mathds 1_{yh\geq1}).
		\end{align*}
		\item If the kernel is an Erlang function, then the one-step transition operator $\mathcal T^N_E$ associated to $(l^N,a^N)$ and evaluated at a function $f\in \hat C (E)$ is:
		\begin{align*}
		\mathcal T^N_E f(y,v)&:=\mathbb E[f(l_{k+1}^N,a_{k+1}^N)|l_k^N=y,a_k^N=v],\\
		=&f\big(\lambda_\infty(1-e^{-\beta h})+ye^{-\beta h}+vh e^{-\beta h},v e^{-\beta h}\big)(1-yh)\mathds 1_{yh<1}\nonumber\\
		&+\int f\big(\lambda_\infty(1-e^{-\beta h})+y e^{-\beta h}+h(v+\alpha z)e^{-\beta h}+,(v+\alpha z)e^{-\beta h}\big)d\nu(z)(yh\mathds 1_{yh<1}+\mathds 1_{yh\geq1}).
		\end{align*}
	\end{enumerate}
\end{prpstn}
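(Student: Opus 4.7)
The proof is really just a direct computation of a conditional expectation; no deep tool is needed once one sets up the conditioning carefully. The key structural observation is that by construction $l_k^N$ (respectively $(l_k^N,a_k^N)$) is a measurable function of $(U_1^N,\ldots,U_k^N,\zeta_1^N,\ldots,\zeta_k^N)$, hence is independent of the ``fresh'' pair $(U_{k+1}^N,\zeta_{k+1}^N)$. Consequently, conditioning on $l_k^N=y$ (resp.\ $(l_k^N,a_k^N)=(y,v)$) freezes $y$ (resp.\ $(y,v)$) in the induction formulas of Definition \ref{chain}, and the remaining expectation is taken purely with respect to the independent pair $(U_{k+1}^N,\zeta_{k+1}^N)$.

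For the exponential case, I would write
\[
\mathcal T_e^N f(y)=\mathbb E\!\left[f\Bigl(\lambda_\infty(1-e^{-\beta h})+(y+\alpha\zeta_{k+1}^N\mathds 1_{U_{k+1}^N<yh})e^{-\beta h}\Bigr)\right],
\]
and partition the underlying probability space according to whether $U_{k+1}^N<yh$ or $U_{k+1}^N\geq yh$. Two sub-cases must be distinguished because $yh$ may exceed $1$: when $yh<1$ one has $\mathbb P(U_{k+1}^N<yh)=yh$ and $\mathbb P(U_{k+1}^N\geq yh)=1-yh$, producing the two terms with the factors $yh$ and $1-yh$; when $yh\geq 1$ the indicator $\mathds 1_{U_{k+1}^N<yh}$ is almost surely equal to $1$, which is what the factor $\mathds 1_{yh\geq 1}$ records in the second summand. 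In either case the expectation with respect to $\zeta_{k+1}^N$ rewrites as the integral against $\nu$ by Fubini together with the independence of $U_{k+1}^N$ and $\zeta_{k+1}^N$. Combining these cases and using $\mathds 1_{yh<1}+\mathds 1_{yh\geq 1}=1$ yields the announced formula.

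The Erlang case is handled identically: the update rule is now vector-valued, but the only source of randomness in one step is again the pair $(U_{k+1}^N,\zeta_{k+1}^N)$, and the branching on $\{U_{k+1}^N<yh\}$ produces the same two terms, the only difference being that the test function $f$ now depends on the deterministic images of both $l_{k+1}^N$ and $a_{k+1}^N$ given that branch. There is no genuine obstacle; the only care needed is the bookkeeping of the $yh\geq 1$ corner case (otherwise one would get a spurious ``probability'' greater than one for the jump) and the verification, routine from the explicit expression, that $\mathcal T_j^N$ indeed maps $\hat C(E)$ into itself, so that the hypotheses of Theorem \ref{convergence} are met downstream.
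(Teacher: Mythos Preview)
Your proposal is correct and follows essentially the same approach as the paper: both arguments use that $l_k^N$ (resp.\ $(l_k^N,a_k^N)$) is measurable with respect to the past innovations and hence independent of $(U_{k+1}^N,\zeta_{k+1}^N)$, then compute the expectation by splitting on the Bernoulli event $\{U_{k+1}^N<yh\}$ and integrating against $\nu$. If anything, you are slightly more careful than the paper in separating out the corner case $yh\geq 1$ explicitly, whereas the paper simply calls $\mathds 1_{U_{k+1}^N<yh}$ a Bernoulli$(yh)$ variable and leaves that case implicit.
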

\begin{proof}
	Let $N\in \mathbb N^*$ and $T>0$.
	\begin{enumerate}
		\item Set $F(l,u,\zeta)=\lambda_{\infty}(1-e^{-\beta h})+(l+\alpha \zeta \mathds 1_{u<l\cdot h})e^{-\beta h}$. $F$ is clearly measurable and $l^N_{k+1}=F(l^N_k,U^N_{k+1},\zeta^N_{k+1})$, where $l_k^N$ is $\sigma(U^N_i,\xi^N_i, i\in \llbracket 0, k\rrbracket)$ measurable, for any $k\in \mathbb N$. Thus, $(l^N_k)_{k\in \mathbb N}$ is a Markov chain.\\
		When it comes to the one-step transition operator, computing the expected value yields:
		\begin{align*}
		\label{onestep}
		\mathcal T^N _e f(y)&:=\mathbb E[f(l_{k+1}^N)|l_k^N=y],\\
		&=\mathbb E[f\big(\lambda_\infty(1-e^{-\beta h})+(l^N_k+\alpha \xi_{k+1}^N\mathds 1_{U_{k+1}^N<l^N_k\cdot h})e^{-\beta h}\big)|l_k^N=y],\\
		&=\mathbb E\big[f \big(\lambda_\infty(1-e^{-\beta h})+(y+\alpha \xi_{k+1}^N\mathds 1_{U_{k+1}^N<y\cdot h})e^{-\beta h}\big)\big],\\
		\end{align*}
		and since $U^N_{k+1}$ and $\xi^N_{k+1}$ are independent from $l^N_k \in \sigma(U^N_i,\xi^N_i, i\in \llbracket 0, k\rrbracket)$ and since $\mathds 1_{U_{k+1}^N<y\cdot h}$ is a Bernoulli variable with parameter $y\cdot h$ independent from $\xi_{k+1}^N$:
		\begin{align*}
		\mathcal T^N_e f(y)=&f\big(\lambda_\infty(1-e^{-\beta h}\big)+ye^{-\beta h})(1-yh)\mathds 1_{yh<1}\nonumber\\
		&+\int f\big(\lambda_\infty(1-e^{-\beta h})+(y+\alpha z)e^{-\beta h}\big)d\nu(z)(yh\mathds 1_{yh<1}+\mathds 1_{yh\geq1}).
		\end{align*}
		\item Set 
		\begin{equation*}
		F(l,a,u,\zeta)=   
		\begin{pmatrix}
		\lambda_\infty(1-e^{-\beta h})+l e^{-\beta h} +(a+\alpha \zeta \mathds 1_{u<l\cdot h}) h e^{-\beta h}\\
		(a+\alpha \zeta \mathds 1_{u<l\cdot h}) e^{-\beta h}
		\end{pmatrix}
		\end{equation*}
		a measurable function. Clearly $(l^N_{k+1}, a^N_{k+1})=F(l^N_k, a^N_k,U^N_{k+1},\zeta ^N_{k+1})$ where $(l^N_k,a^N_k)$ is $\sigma(U^N_i,\xi^N_i, i\in \llbracket 0, k\rrbracket)$ measurable, for any $k\in \mathbb N$. Thus, $(l^N_k,a^N_k)_{k\in \mathbb N}$ is a Markov chain.\\
		The Erlang one-step generator can be obtained just like exponential one. 
	\end{enumerate}
\end{proof}

\begin{thrm}
	$E=\mathbb R_+$ or $\mathbb R_+^2$.\\
	Let $f\in \hat C^2_c(E)$, $\mathcal A$ be the generator of a Hawkes intensity and $\mathcal T^N_j$ where $j\in \{e,E\}$ the operator described in Proposition \ref{onestep}. Then
	$$\|\frac{\mathcal T^N_j f-f}{h_N}-\mathcal A_j f\|\longrightarrow 0.$$
\end{thrm}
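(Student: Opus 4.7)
The plan is to prove the claim by explicit Taylor expansion of $\mathcal T^N_j f$, using that $f \in \hat C^2_c(E)$ has bounded first and second derivatives together with compact support. I focus on the exponential case; the Erlang case is structurally identical but with a two–variable Taylor expansion. Write $h = h_N$, fix $B>0$ with $\operatorname{supp}(f) \subset [0,B]$, and introduce the deterministic ``no–jump'' image
\begin{equation*}
A(y,h) := \lambda_\infty (1-e^{-\beta h}) + y e^{-\beta h}, \qquad A - y = (\lambda_\infty - y)(1-e^{-\beta h}).
\end{equation*}
On any compact set $\tfrac{A-y}{h} \to \beta(\lambda_\infty - y)$ uniformly and $\tfrac{(A-y)^2}{h} = O(h)$. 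For $yh<1$ one rewrites
\begin{equation*}
\mathcal T^N_e f(y) - f(y) = \bigl[f(A)-f(y)\bigr] + yh\!\int \bigl[f(A+\alpha z e^{-\beta h}) - f(A)\bigr]\,d\nu(z).
\end{equation*}

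Divide by $h$. A second-order Taylor expansion of the first bracket gives $f'(y)\tfrac{A-y}{h} + \tfrac12 f''(\eta)\tfrac{(A-y)^2}{h}$, which converges uniformly on every compact set to $\beta(\lambda_\infty - y)f'(y)$ since $\|f''\|<\infty$. For the integral term, one first uses $|f(u)-f(v)|\le \|f'\||u-v|$ plus the bound $|f(u)-f(v)|\le 2\|f\|$ to dominate the integrand; then a standard truncation at $z\le M$ (uniform continuity of $f$ on the relevant compact set) plus $z>M$ (controlled by $\mathbb E[\zeta]<\infty$) yields uniform-on-compacts convergence of $\int [f(A+\alpha z e^{-\beta h})-f(A)]\,d\nu(z)$ to $\int [f(y+\alpha z)-f(y)]\,d\nu(z)$. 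Multiplication by the factor $y$ (bounded on the compact set of interest) reconstructs exactly $\mathcal A_e f(y)$.

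It remains to handle $y$ outside a fixed compact set $[0,y_1]$. For $y>B$ both $f(y)=0$ and $f'(y)=0$, so a second-order Taylor expansion forces $|f(A)| \le \tfrac12\|f''\|(A-y)^2 = O(h^2)$ and a similar bound for the shifted argument; moreover $f(A)$ itself vanishes as soon as $y > Be^{\beta h} - \lambda_\infty(e^{\beta h}-1)$, so only a transition strip of width $O(h)$ above $B$ contributes, giving $O(h)\to 0$ after division by $h$. When $yh\ge 1$, the condition $y\ge 1/h$ eventually places $y$ far beyond $B$, and the single branch of $\mathcal T^N_e f$ reduces to zero by the same compact-support argument. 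For the Erlang case one repeats the scheme with the two-dimensional drift $(l_+ -y, a_+ - v) = \bigl((\lambda_\infty - y)(1-e^{-\beta h}) + v h e^{-\beta h},\; v(e^{-\beta h}-1)\bigr)$, whose limit $(v + \beta(\lambda_\infty - y), -\beta v)$ reproduces the first two terms of $\mathcal A_E$, while the jump contribution is handled as above.

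The main obstacle is the uniformity, specifically on the thin transition strip just above $\operatorname{supp}(f)$: there the discrete chain can still jump back into the support of $f$, so $\mathcal T^N_e f(y)$ is nonzero even though $f(y) = \mathcal A_e f(y) = 0$, and a naive first-order bound after division by $h$ would blow up. The second-order Taylor cancellation, available precisely because $f\in\hat C^2_c$ satisfies $f(y)=f'(y)=0$ outside the support, is exactly what kills this term at rate $h^2$ and drives the uniform norm to zero.
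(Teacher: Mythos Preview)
Your argument is correct and follows the same Taylor-expansion strategy as the paper, but the two proofs organize the uniformity differently. The paper does not split into a compact region plus a transition strip; instead, after writing the Lagrange-form remainder it observes that each remainder term has the shape $y^k f^{(j)}(\theta_y)$ (or the analogous $\gamma_y$ version) with $\theta_y \in [y e^{-\beta T},\,\lambda_\infty(1-e^{-\beta T})+y]$, so the map $y \mapsto y^k \sup_{x \geq y e^{-\beta T}} |f^{(j)}(x)|$ is itself compactly supported and hence bounded by a constant independent of $y$. This yields $|R(y)| \leq C$ uniformly on all of $\mathbb R_+$ in one line, with no separate treatment of large $y$. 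Your route is equivalent but more hands-on; the only spot to tighten is ``a similar bound for the shifted argument'': the shift $\alpha z e^{-\beta h}$ is not itself $O(h)$, so you should record that whenever $A + \alpha z e^{-\beta h}$ lands in $\operatorname{supp}(f) \subset [0,B]$ it necessarily lies between $A$ and $B$, both within $O(h)$ of $y$ on the strip, which is what makes the second-order bound $O(h^2)$ uniform in $z$. Your truncation argument for the integral term on compacts is also heavier than needed: the direct Lipschitz estimate $|f(A+\alpha z e^{-\beta h}) - f(y+\alpha z)| \leq \|f'\|\bigl(|A-y| + \alpha z\,|e^{-\beta h}-1|\bigr)$ integrates against $\nu$ (using $\mathbb E[\zeta]<\infty$) to give $O(h)$ immediately.
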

\begin{proof}
	First we remind that $h=h_N$.\\
	\textbf {If the kernel is an exponential function}\\
	$E=\mathbb R_+$.\\
	Let $f\in \hat C^2_c(E)$ be a fixed function. We start by giving an alternative expression for $\frac{\mathcal T^N_e f(y)-f(y)}{h_N}$ for a fixed $y\in {E}$:
	\begin{align*}
	\frac{ \mathcal T^N_e f(y)-f(y)}{h_N}
	=&\big[f(\lambda_\infty(1-e^{-\beta h})+ye^{-\beta h})(1-yh)\mathds 1_{yh<1}\nonumber\\
	&+\int f(\lambda_\infty(1-e^{-\beta h})+(y+\alpha z)e^{-\beta h})d\nu(z)(yh\mathds 1_{yh<1}+\mathds 1_{yh\geq1})-f(y)\big]h_N^{-1},\\
	=&\big[f(\lambda_\infty(1-e^{-\beta h})+ye^{-\beta h})(1-yh)+\int f(\lambda_\infty(1-e^{-\beta h})+(y+\alpha z)e^{-\beta h})d\nu(z)yh -f(y)\big]h_N^{-1}\mathds 1_{yh<1}\nonumber\\
	&+\big[\int f(\lambda_\infty(1-e^{-\beta h})+(y+\alpha z)e^{-\beta h})-f(y) d\nu(z)\big]h_N^{-1}\mathds 1_{yh\geq1},\\
	\end{align*}
	since $\int d\nu(z)=1$.\\
	Now using the fact that $f$ is twice differentiable, we use Taylor expansion with a Lagrange remainder:
	\begin{align*}
	f\big(\lambda_\infty(1-e^{-\beta h})+ye^{-\beta h}\big)=&f(y)+f'(y)\big[(\lambda_\infty-y)(1-e^{-\beta h})\big]\\&+ \frac{1}{2}f''(\theta_y)\big[(\lambda_\infty-y)(1-e^{-\beta h})\big]^2,\\
	\end{align*}
	where $\theta_y \in \big[\inf (y,\lambda_\infty(1-e^{-\beta h})+ye^{-\beta h}),\sup (y,\lambda_\infty(1-e^{-\beta h})+ye^{-\beta h})\big]\subset [ye^{-\beta T},\lambda_\infty(1-e^{-\beta T})+y]$.\\
	The last inclusion will be used later.\\
	We apply another Taylor expansion to obtain:
	$$f\big(\lambda_\infty(1-e^{-\beta h})+(y+\alpha z)e^{-\beta h}\big)=f(y+\alpha z)+f'(\gamma_y)\big[(\lambda_\infty-y-\alpha z)(1-e^{-\beta h})\big]$$
	where $\gamma_y \in [(y+\alpha z)e^{-\beta T},\lambda_\infty(1-e^{-\beta T})+y+\alpha z].$\\
	Thus we have:
	\begin{align*}
	\frac{ \mathcal T^N_e f(y)-f(y)}{h_N}\mathds 1_{yh<1}=&\big (\cancel {f(y)}+f'(y)\big[(\lambda_\infty-y)(1-e^{-\beta h})\big]+ \frac{1}{2}f''(\theta_y)\big[(\lambda_\infty-y)(1-e^{-\beta h})\big]^2\\
	&-yhf(y)-yhf'(y)\big[(\lambda_\infty-y)(1-e^{-\beta h})\big]-yh \frac{1}{2}f''(\theta_y)\big[(\lambda_\infty-y)(1-e^{-\beta h})\big]^2\\
	&+yh\int f(y+\alpha z)+f'(\gamma_y)\big[(\lambda_\infty-y-\alpha z)(1-e^{-\beta h})\big]d\nu(z)-\cancel {f(y)}\big)h^{-1}\mathds 1_{yh<1},
	\end{align*}
	a Taylor expansion for small $h$ yields $1-e^{-\beta h}=\beta h+O(h^2)=O(h)$, thus:
	\begin{align*}
	\frac{ \mathcal T^N_e f(y)-f(y)}{h_N}\mathds 1_{yh<1}  =&\big(f'(y)(\lambda_\infty-y)\big(\beta+O(h)\big) +\frac{1}{2}f''(\theta_y)(\lambda_\infty-y)^2\frac{1-e^{-\beta h}}{h}O(h),\\
	&-yf(y)-yf'(y)(\lambda_\infty-y)O(h)-y\frac{1}{2}f''(\theta_y)(\lambda_\infty-y)(1-e^{-\beta h})O(h)\\
	&+y\int f(y+\alpha z)d\nu(z)+y\int f'(\gamma_y)(\lambda_\infty-y-\alpha z) d\nu (z)O(h)\big )\mathds 1_{yh<1},\\
	=&\big (\beta(\lambda_\infty-y)f'(y)+y\int \big(f(y+\alpha z)-f(y)\big)d\nu (z) \big)\mathds 1_{yh<1}+R(y)O(h)\mathds 1_{yh<1},\\
	=&\mathcal A_e f(y)\mathds 1_{yh<1}+R(y)O(h)\mathds 1_{yh<1},
	\end{align*}
	The remainder $R$ has the expression:
	\begin{align*}
	R(y)=&f'(y)(\lambda_\infty-y)+\frac{1}{2}f''(\theta_y)(\lambda_\infty-y)^2\frac{1-e^{-\beta h}}{h}\\
	&+yf'(y)(\lambda_\infty-y)-y\frac{1}{2}f''(\theta_y)(\lambda_\infty-y)^2(1-e^{-\beta h})
	+y\int f'(\gamma_y)(\lambda_\infty-y-\alpha z) d\nu (z).
	\end{align*}
	Note that the remainder is bounded. Remember that $f$ is continuous with compact support (so are its derivatives), so the function $y\rightarrow \sup_{x \in [ye^{-\beta T},\lambda_\infty(1-e^{-\beta T})+y]} |f''(x)|$ is also continuous with compact support, which in turn means that $y\rightarrow y^k\sup_{x \in [y e^{-\beta T},\lambda_\infty(1-e^{-\beta T})+y]} |f''(x)|$ for $k=2,3$ is also compact support, thus bounded by a positive constant independently from $y$. Applying the same logic to the terms involving $f'$ yields:
	$$\big|R(y)\big|\leq C,$$
	where $C$ is a positive constant independent from $y$.
	After all these computations, we prove the uniform convergence for.\\
	Let $\epsilon >0$ and $B$ a constant such that $y>B $ implies $f(y)=f'(y)=0$.\\
	For all $y\in {\mathbb R_+}$ we have:
	\begin{align*}
	|\frac{\mathcal T^N_e f(y)-f(y)}{h_N}-\mathcal A_e f(y)|&=\big|\big(\frac{\mathcal T^N_e f(y)-f(y)}{h_N}-\mathcal A_e f(y)\big)\mathds 1_{yh<1}+\big(\frac{\mathcal T^N_e f(y)-f(y)}{y}-\mathcal A_e f(y)\big)\mathds 1_{yh\geq1}\big|\\
	&\leq \big|\big(\frac{\mathcal T^N_e f(y)-f(y)}{h_N}-\mathcal A_e f(y)\big)\big|\mathds 1_{yh<1}+\big|\big(\frac{\mathcal T^N_e f(y)-f(y)}{h_N}-\mathcal A_e f(y)\big)\big|\mathds 1_{yh\geq1}.
	\end{align*}
	Now we plug in the inequality obtained previously for the term in front of $\mathds 1_{yh<1}$ and expand the one in front of $\mathds 1_{yh\geq 1}$:
	\begin{align*}
	|\frac{\mathcal T^N_e f(y)-f(y)}{h_N}-\mathcal A_e f(y)|&\leq Ch \mathds 1_{yh<1}+ \big|\big[\int f\big(\lambda_\infty(1-e^{-\beta h})+(y+\alpha z)e^{-\beta h}\big)-f(y)d\nu (z)\big]h_N^{-1}-\mathcal A_e f(y)\big| \mathds 1_{yh\geq 1}
	\end{align*}
	where $O(h)$ has been absorbed by the constant $C$.\\
	
	Let $N_0$ be the integer such that if $N\geq N_0$, then $\frac{N}{T}e^{-\beta \frac{T}{ N}}\geq B$. Such integer exists because $\frac{N}{T}e^{-\beta\frac{T}{ N}}\geq \frac{N}{T}e^{-\beta T}\rightarrow +\infty$ as $N\rightarrow +\infty$.\\
	
	Set $N_1=\lfloor \frac{CT}{\epsilon}\rfloor+TB+N_0$.\\
	For every $N\geq N_1$ and every $y\in {\mathbb R_+}$ only one of these two scenarios is possible:
	\begin{itemize}
		\item $\mathds 1_{yh<1}=1$ and $\mathds 1_{yh\geq 1}=0$, so $  \big|\frac{\mathcal T^N_e f(y)-f(y)}{h_N}-\mathcal A_e f(y)\big| \leq \epsilon$ because $N\geq N_1\geq  \lfloor \frac{CT}{\epsilon}\rfloor$.
		\item $\mathds 1_{yh<1}=0$ and $\mathds 1_{yh\geq 1}=1$ which means $y\geq \frac{N}{T}$, thus $y\geq B$ and $(y+\alpha z)e^{-\beta h}\geq \frac{N}{T}e^{-\beta \frac{T}{ N}}\geq B$.\\
		Therefore $f(\lambda_\infty(1-e^{-\beta h})+(y+\alpha z)e^{-\beta h})=f(y)=f'(y)=0$ $\forall z \in \mathbb R _+$.\\
		Which leads to $\big|\frac{\mathcal T^N_e f(y)-f(y)}{h_N}-\mathcal A_e f(y)\big|\leq 0$.
	\end{itemize}
	Each scenario leads to the same result: $\big|\frac{\mathcal T^N_e f(y)-f(y)}{h_N}-\mathcal A_e f(y)\big|\leq \epsilon.$\\
	In conclusion, since the rank $N_1$ is independent from the choice of $y$, one can deduce that $\forall N\geq N_1$
	$$\|\frac{\mathcal T^N_e f-f}{h_N}-\mathcal A_e f\|\leq \epsilon.$$
	\textbf{If the kernel is an Erlang function}\\
	$E=\mathbb R_+^2.$ 
	Let $f\in \hat C^2_c(E)$ be a fixed function. We start by giving an alternative expression for $\frac{\mathcal T^N_E f(y)-f(y)}{h_N}$ for a fixed $(y,v)\in {E}$:
	\begin{align*}
	\frac{\mathcal T^N_E f(y,v)-f(y,v)}{h_N}=& \big [ f\big(\lambda_\infty(1-e^{-\beta h})+ye^{-\beta h}+vh e^{-\beta h},v e^{-\beta h}\big)(1-yh)\mathds 1_{yh<1}\nonumber\\
	&+\int f\big(\lambda_\infty(1-e^{-\beta h})+y e^{-\beta h}+h(v+\alpha z)e^{-\beta h}+,(v+\alpha z)e^{-\beta h}\big)d\nu(z)\\ & (yh\mathds 1_{yh<1}+\mathds 1_{yh\geq1})-f(y,v)\big] h_N^{-1},\\
	=& \big [f\big(\lambda_\infty(1-e^{-\beta h})+ye^{-\beta h}+vh e^{-\beta h},v e^{-\beta h}\big)(1-yh) \\
	&+\int f\big(\lambda_\infty(1-e^{-\beta h})+y e^{-\beta h}+h(v+\alpha z)e^{-\beta h}+,(v+\alpha z)e^{-\beta h}\big)d\nu(z) yh-f(y,v)\big]h_N^{-1} \mathds 1_{yh<1} \\
	&+\big [\int f\big(\lambda_\infty(1-e^{-\beta h})+y e^{-\beta h}+h(v+\alpha z)e^{-\beta h}+,(v+\alpha z)e^{-\beta h}\big)-f(y,v)d\nu(z) \big ]h_N^{-1} \mathds 1_{yh\geq1}
	\end{align*}
	since $\int d\nu(z)=1$.\\
	Now using the fact that $f$ is twice differentiable, we use Taylor expansion with a Lagrange remainder:
	\begin{align*}
	f\big(\lambda_\infty(1-e^{-\beta h})+ye^{-\beta h}+vh e^{-\beta h},v e^{-\beta h}\big)=& f(y,v)+\big ((\lambda_{\infty}-y)(1-e^{-\beta h})+vh e^{-\beta h}\big)\partial_\lambda f(y,v)\\
	&+v(e^{-\beta h}-1)\partial _{\xi} f(y,v)+\frac{1}{2}v^2(e^{-\beta h}-1)^2\partial _{\xi \xi} f(\theta_y,\theta_v)\\
	&+\frac{1}{2}\big ((\lambda_{\infty}-y)(1-e^{-\beta h})+vh e^{-\beta h}\big)^2\partial_{\lambda \lambda} f(\theta_y,\theta_v)\\
	&+\big ((\lambda_{\infty}-y)(1-e^{-\beta h})+vh e^{-\beta h}\big)v(e^{-\beta h}-1) \partial_{\lambda\xi} f(\theta_y,\theta_v),
	\end{align*}
	where $\theta_y \in [y e^{-\beta T},\lambda_\infty(1-e^{-\beta h})+y+vh]$ and $\theta_v \in [v e^{-\beta T},v]$.\\
	Just like the exponential case, it is possible to bound all the second order terms by a constant, thus:
	\begin{align*}
	f\big(\lambda_\infty(1-e^{-\beta h})+ye^{-\beta h}+vh e^{-\beta h},v e^{-\beta h}\big)=& f(y,v)+\big ((\lambda_{\infty}-y)(1-e^{-\beta h})+vh e^{-\beta h}\big)\partial_\lambda f(y,v)\\
	&+v(e^{-\beta h}-1)\partial _{\xi} f(y,v)+R_1(y,v) h^2,\\
	\end{align*}
	where $R_1$ is a compact support function that contains all the second derivatives.
	We apply another Taylor expansion to obtain:
	\begin{align*}
	f\big(\lambda_\infty(1-e^{-\beta h})+y e^{-\beta h}+h(v+\alpha z)e^{-\beta h}+,(v+\alpha z)e^{-\beta h}\big)=& f(y,v+\alpha z)+ (v+\alpha z)(e^{-\beta h}-1) \partial _\xi f(\gamma_y,\gamma_v)\\
	&+\big((\lambda_\infty -y) (1-e^{-\beta h})+h(v+\alpha z)e^{-\beta h} \big)\partial _\xi f(\gamma_y,\gamma_v),
	\end{align*}
	where $\gamma_y \in [ye^{-\beta T}, \lambda_\infty(1-e^{-\beta h})+y+(v+\alpha z)h]$ and $\gamma_v \in \big[(v+\alpha z) e^{-\beta T},(v+\alpha z)\big]$. And it is possible to write it under the form:
	\begin{align*}
	f\big(\lambda_\infty(1-e^{-\beta h})+y e^{-\beta h}+h(v+\alpha z)e^{-\beta h},(v+\alpha z)e^{-\beta h}\big)=& f(y,v+\alpha z)+R_2(y,v,z) O(h),
	\end{align*}
	where $R_2$ is not necessarily bounded as $z$ goes to infinity but it is not a problem since $\int z d\nu (z)$ is bounded.\\
	Hence:
	\begin{align*}
	\frac{\mathcal T^N_E f(y,v)-f(y,v)}{h_N}\mathds 1_{yh<1}=&\big [ \big (\cancel{f(y,v)} +\big ((\lambda_{\infty}-y)(1-e^{-\beta h})+vh e^{-\beta h}\big)\partial_\lambda f(y,v)\\
	&+v(e^{-\beta h}-1)\partial _{\xi} f(y,v)+R_1(y,v) O(h^2) -\cancel{f(y,u)}\big ) (1-yh)\big ] h^{-1} \mathds 1_{yh<1}\\
	&+\big [yh \int f(y,v+\alpha z) +R_2 (y,v,z) O(h) -f(y,v) d\nu(z)\big]h^{-1} \mathds 1 _{yh<1},\\
	=&\big [ \big ( \big ((\lambda_{\infty}-y)\big(\beta h +O(h^2)\big)+vO(h) \big)\partial_\lambda f(y,v)\\
	&+v\big(-\beta h +O(h^2)\big)\partial _{\xi} f(y,v)+R_1(y,v) O(h^2) \big ) (1-yh)\big ] h^{-1} \mathds 1_{yh<1}\\
	&+\big [y\cancel{h} \int f(y,v+\alpha z) -f(y,v) d\nu(z) +\cancel{h}\int yR_2 (y,v,z) d\nu(z)O(h)\big]\cancel{h^{-1}} \mathds 1 _{yh<1},\\
	=&\mathcal A _E f(y,v) \mathds 1_{yh<1} +R(y,v) O(h)\mathds 1_{yh<1}.
	\end{align*}
	The remainder $R(y,v)$ contains $R_1(y,v)$ and $\int R_2(y,v,z) d\nu(z)$ as well as their products with $y$ and $v$. It is a compact support functions thus it is bounded by a constant $C$ independent from $y, v$ and $h$.\\
	Let $\epsilon >0$ and $B$ a constant such that $y>B $ or $v>B$ implies $f(y,v)=f'(y,v)=0$ .\\
	For all $y\in {\mathbb R_+}$ we have:
	\begin{align*}
	|\frac{\mathcal T^N_E f(y)-f(y)}{h_N}-\mathcal A_E f(y)|&=\big|\big(\frac{\mathcal T^N_E f(y)-f(y)}{h_N}-\mathcal A_E f(y)\big)\mathds 1_{yh<1}+\big(\frac{\mathcal T^N_E f(y)-f(y)}{y}-\mathcal A_E f(y)\big)\mathds 1_{yh\geq1}\big|\\
	&\leq \big|\big(\frac{\mathcal T^N_E f(y)-f(y)}{h_N}-\mathcal A_E f(y)\big)\big|\mathds 1_{yh<1}+\big|\big(\frac{\mathcal T^N_E f(y)-f(y)}{h_N}-\mathcal A_E f(y)\big)\big|\mathds 1_{yh\geq1}.
	\end{align*}
	Now we plug in the inequality obtained previously for the term in front of $\mathds 1_{yh<1}$ and expand the one in front of $\mathds 1_{yh\geq 1}$:
	\begin{align*}
	|\frac{\mathcal T^N_E f(y,v)-f(y,v)}{h_N}-\mathcal A_E f(y,v)|\leq& Ch \mathds 1_{yh<1} \\&+ \big|\big[\int f\big(\lambda_\infty(1-e^{-\beta h})+ye^{-\beta h}+h(v+\alpha z)e^{-\beta h},(v+\alpha z)e^{-\beta h}\big)-f(y,v)d\nu (z)\big]h_N^{-1}\\&-\mathcal A_e f(y,v)\big| \mathds 1_{yh\geq 1}
	\end{align*}
	where $O(h)$ has been absorbed by the constant $C$.\\
	Let $N_0$ be the integer such that if $N\geq N_0$, then $\frac{N}{T}e^{-\beta \frac{T}{ N}}\geq B$. Such integer exists because $\frac{N}{T}e^{-\beta\frac{T}{ N}}\geq \frac{N}{T}e^{-\beta T}\rightarrow +\infty$ as $N\rightarrow +\infty$.\\
	
	Set $N_1=\lfloor \frac{CT}{\epsilon}\rfloor+TB+N_0$.\\
	For every $N\geq N_1$ and every $(y,v)\in E$ only one of these two scenarios is possible:
	\begin{itemize}
		\item $\mathds 1_{yh<1}=1$ and $\mathds 1_{yh\geq 1}=0$, so $  \big|\frac{\mathcal T^N_e f(y,v)-f(y,v)}{h_N}-\mathcal A_e f(y,v)\big| \leq \epsilon$ because $N\geq N_1\geq  \lfloor \frac{CT}{\epsilon}\rfloor$.
		\item $\mathds 1_{yh<1}=0$ and $\mathds 1_{yh\geq 1}=1$ which means $y\geq \frac{N}{T}$, thus $y\geq B$ and $ye^{-\beta h}\geq \frac{N}{T}e^{-\beta \frac{T}{ N}}\geq B$.\\
		Therefore $f(\lambda_\infty(1-e^{-\beta h})+ye^{-\beta h}+(v+\alpha z)e^{-\beta h},(v+\alpha z)e^{-\beta h})=f(y,v)=\partial _{\lambda}f(y,v)=\partial_{\xi}f(y,v)=0$ $\forall z \in \mathbb R _+$.\\
		Which leads to $\big|\frac{\mathcal T^N_E f(y,v)-f(y,v)}{h_N}-\mathcal A_E f(y,v)\big|\leq 0$.
	\end{itemize}
	In conclusion 
	$$\|\frac{\mathcal T^N_E f-f}{h_N}-\mathcal A_E f\|\leq \epsilon.$$
\end{proof}
\section{Conclusion}
We have proven that the DTHP converges weakly to a time continuous Hawkes process in the case the kernel is an exponential or an Erlang function. The following figure shows a trajectory of a DTHP with a small time step.

\begin{figure}[h!]
	\centering
	\includegraphics[width=130mm]{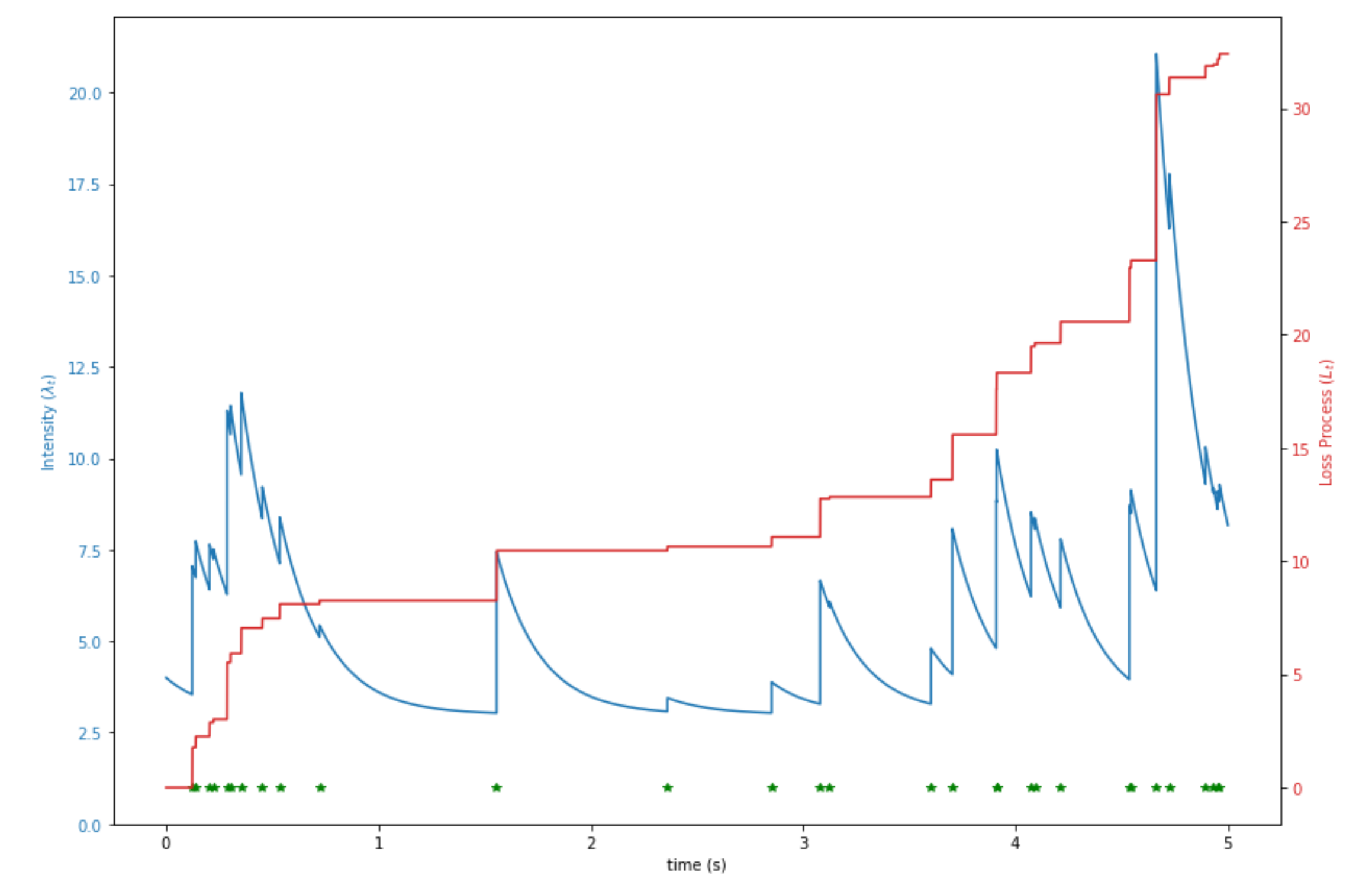}
	\caption{A trajectory of the loss process as well as the intensity in the case of an exponential kernel for $\alpha=2$, $\beta=5$, $\lambda_{\infty}=3$, $\lambda_0=4$ and $N=100000$ points. The financial losses follow an exponential distribution of rate one. The green stars show the jumping times. Note how they are identical for $L_t$ and $\lambda_t$ and exhibit a clustering behaviour.}
	\label{fig:my_label}
\end{figure}

This result is generalisable to a wider class of Hawkes processes like the multivariate Hawkes process whose kernels are exponential/Erlang functions or in the case of a higher order Erlang kernel $\phi(u)=\alpha u^n e^{-\beta u}$ with $n \geq 2$. However, despite being of the same nature, computations for these classes are way too heavy and repetitive to be included in this document.\\

It is also worth mentioning that this convergence does not have a quantified speed yet. 
It would be interesting to have an upper bound on the distance between the two processes as a function of the time step.


\end{document}